\newcommand{\cn}{\color{black}}
\newtheorem{theorem}{Theorem}[section]
\newtheorem{lemma}[theorem]{Lemma}
\newenvironment{proof}[1][Proof]{\noindent \emph{#1.} }{\hfill \
\rule{0.5em}{0.5em}}
\makeatletter\@addtoreset{equation}{section}\makeatother
\makeatletter\@addtoreset{figure}{section}\makeatother
\makeatletter\@addtoreset{table}{section}\makeatother
\begin{document}
 
\title{Tensor-based techniques for fast discretization and solution   
of 3D elliptic equations with \\ random coefficients}
 
\author{Venera Khoromskaia \thanks{Max-Planck-Institute for
        Mathematics in the Sciences, Inselstr.~22-26, D-04103 Leipzig,
        Germany ({\tt vekh@mis.mpg.de}).}
        \and
        Boris N. Khoromskij \thanks{Max-Planck-Institute for
        Mathematics in the Sciences, Inselstr.~22-26, D-04103 Leipzig,
        Germany ({\tt bokh@mis.mpg.de}).}
        }

\date{}

\maketitle

\begin{abstract}
In this paper, we propose and analyze the numerical algorithms 
for fast solution of periodic elliptic problems in random media in $\mathbb{R}^d$, $d=2,3$. 
We consider the stochastic realizations using checkerboard configuration of the equation coefficients 
built on a large $L \times L \times L$ lattice, where $L$ is the size of representative volume 
elements. The Kronecker tensor product scheme is introduced
for fast generation  of the stiffness matrix for FDM discretization on a tensor grid.
We describe tensor techniques for the construction of the  low Kronecker rank spectrally 
equivalent preconditioner in periodic setting to be used in the framework of PCG iteration. 
In our construction the diagonal matrix of the discrete Laplacian inverse represented 
in the Fourier basis is reshaped into a 3D tensor,
 which is then approximated by a low-rank canonical tensor, 
calculated by the multigrid Tucker-to-canonical tensor transform.
The FDM discretization scheme on a tensor grid is described in detail, and 
the computational characteristics in terms of $L$ for the 3D Matlab implementation 
of the PCG iteration are illustrated.
 The present work continues the developments in  \cite{KKO:17}, 
 where the numerical primer to study the asymptotic convergence rate vs. $L$ for the homogenized 
 matrix for 2D elliptic PDEs with random coefficients was investigated numerically.
  The presented elliptic problem solver 
 can be applied for calculation of long sequences of stochastic realizations in
 numerical analysis of 3D  stochastic homogenization problems for ergodic processes, 
 for  solving 3D quasi-periodic geometric homogenization problems, as well as 
 in the numerical simulation of dynamical many body interaction processes and multi-particle electrostatics.
  \end{abstract}

\noindent\emph{Key words:}
3D elliptic problem solver, PDE with random coefficients, PCG iteration, 
low-rank tensor product approximation, Kronecker product, stochastic homogenization. 

\noindent\emph{AMS Subject Classification:} 65F30, 65F50, 65N35, 65F10

\section{Introduction}\label{Int:SH} 

Stochastic homogenization methods  provide means for calculating the average characteristics of the
  structural and geometric properties of random composites.  
 The  numerical schemes for solving elliptic partial differential equations (PDEs)
 with random input in the form of 
 stochastic/parametric elliptic equations have been intensively discussed in the literature
 \cite{FrSchTo:05,BrisLegoll:11,BoSchw:11,KuoSchSl:12,CaEhrLeSt,CaEhrLeStXi:18_2,CaEhrLeStXi:18,Fischer:18,KKO:17,BrisLeg:17}. 
 The theoretical analysis of quisi-periodic and stochastic/parametric problems can be found in 
 \cite{BeLiPa:78,Kozlov:79,JiKoOl:95,EngSoug:08,CoDeSch:11} and in references therein.
 The rank structured tensor methods for quasi-periodic geometric homogenization methods and for the 
 elliptic equations with highly oscillating coefficients were considered in 
 \cite{BokhSRep2:16,KaOsRaSch:20}.
 Data sparse and tensor methods for stochastic/parametric elliptic problems have been considered in 
 \cite{KhLitMat1:08,BoSchw:11,KhOsel_2_SPDE:10,SchGit:11,DoKhLiMa:15,DoSch:19}.

 The main computational challenge in stochastic homogenization techniques
 is that the exhausting of the important information 
 from the stochastic PDE (say,  homogenized coefficient matrix or solution, 
 and other important quantities of the stochastic process) 
 requires a huge number of realizations, 
 i.e. solving the target PDE many times for different  stochastic input. 
 In this respect, the valuable 3D stochastic simulations presuppose the 
 strong requirements to the numerical efficiency of the chosen 3D elliptic problem solver.
 
 This paper continues the development of efficient algorithms 
 initiated by a numerical primer in \cite{KKO:17}  for fast solution
  of the 2D elliptic PDEs in random media, where the computational scheme 
  for stochastic realizations using   the  general overlapping-type coefficient profile has been 
 developed. The numerical study in \cite{KKO:17} confirmed the theoretical convergence rate 
 for the homogenized coefficients matrix in the size of representative volume element (RVE), presented in
 \cite{GloOt:11,GlOtto:12,GlOtto:15,GlOtto:16}.
   Recall that the algorithms described in \cite{KKO:17} are capable for 2D calculations with 
  the number of realizations limited by the order of $M=10^5$, implemented for 
  coefficient configuration built on $L\times L$ lattices with the RVE size $L$ up to $128$. 
  However, the 3D calculations by using 
 a general overlapping-type profile for generation of random coefficients seem to be prohibitive
 for the large number of coefficient realizations over $L \times L \times L$ lattice structures. 
%
  
 In this paper, we describe the numerical scheme  for discretization and 
 solution of the $d$-dimensional 
 stochastic homogenization problems for $d=2,3$, which  employs the 
 realizations over a checkerboard type  configuration of the stochastic coefficient
 on the $L \times L$ or  $L \times L \times L$ lattice, respectively. 
  In 3D case, we use   the product  piecewise linear finite elements on the  $n \times n \times n$ 
  Cartesian grids with $n=n_0 L$, $n_0=4,8,16,\dots$, assuming that the 
  jumps in the equation coefficient are resolved by  
  non-overlapping square subdomains (unit cells). 
 We introduce a tensor-based scheme for fast generation of the stiffness 
 matrix for both 2D and 3D problems by using the Kronecker product construction for assembling  
 of the FEM stiffness matrix and for the design of the rank-structured  preconditioner. 
 In the 3D case, we construct the spectrally equivalent preconditioner by
 employing   the explicit representation (approximation) 
 of the 3D periodic Laplacian operator inverse 
 in the Fourier basis  in a form of a short sum  of the three-fold 
 Kronecker products of $n \times n$ matrices, similar to \cite{HeidKh2Sch:18} where the Laplacian 
 with Dirichlet boundary conditions was considered. Algorithmically, in our construction  
 the diagonal matrix of the discrete Laplacian inverse represented in the Fourier basis is reshaped into a 3D tensor,
 which is then approximated by a low-rank canonical tensor, see Lemma \ref{lem:RankPinverse}. 
 This approximation is calculated by
 using the multigrid Tucker-to-canonical tensor transform introduced in \cite{KhKh3:08,Khor_bookQC_2018}.

 The presented numerical scheme with the  checkerboard type coefficients in 2D leads 
 to a much faster method as compared with  that for overlapping coefficients \cite{KKO:17}. 
 This allows us to perform computations with RVE size $L$ up to $L=512$ for 2D problems
 and for the number of realizations of the order of $2^{15}$.
 For 3D case the large number realizations,  $M$, for RVE size up to  $L=32, 64$, discretized 
 on $n\times n \times n$ grids $n=n_0 L$ can be calculated. 
  The proposed tensor-based numerical  techniques enable computations of the   descriptive
series of stochastic realizations  for $2D$ and $3D$ problems in a wide range 
of the RVE size $L$,  
using MATLAB on a moderate computer cluster. 

The proposed elliptic problem solver 
 can be applied for the numerical analysis of 3D  stochastic homogenization problems for ergodic processes
 with variable  contrast in random coefficients, 
 for solving numerically stochastic elliptic PDEs in random heterogeneous materials in $\mathbb{R}^3$, 
 for fast solution of quasi-periodic (multi-scale) geometric homogenization problems for elliptic equations, in  
 the computer simulation of dynamical many body interaction processes and multi-particle electrostatics,
 as well as for numerical analysis of optimal control problems in random media.

  The rest of the paper is organized as follows. In Section \ref{Int2:SH}, we describe the 
  problem setting and specify the particular schemes for random generation of stochastic coefficients.  
  Section \ref{sec:Comp_Ahom} presents the main computational approach, where
  \S\ref{ssec:Discret_Scheme} describes the discretization scheme, 
  \S\ref{ssec:FDM_Kron} outlines the matrix generation by using Kronecker product sums
  and \S\ref{ssec:FDM_Kron_stoch} sketches the method for fast matrix assembling of the 
  stochastic part. Section \ref{sec:PCG} describes  the construction of the efficient
  low Kronecker-rank spectrally close preconditioner in the PCG iteration for solving   
  elliptic problems with variable coefficients arising for stochastic realizations with fixed 
  value of RVE size, $L$,  and the univariate grid size $n$.
  Section \ref{sec:Numer_2D_3D} presents the results of numerical experiments 
  demonstrating the asymptotic complexity and timing of the Matlab implementation.
  In the spirit of \cite{KKO:17}, we also verify numerically the standard estimates 
  on the asymptotic convergence rate
  for the simple average (standard deviation)  of the homogenized coefficient matrix
  for 2D and 3D stochastic simulations with the checkerboard-type realization of coefficients.

\section{General problem setting }\label{Int2:SH} 
 
For given $f\in L^2(\Omega)$ such that $\int_{\Omega} f(x) dx =0$,
we consider the model elliptic boundary value problems on $\Omega :=[0,1)^d$, for $d=2,3$,
\begin{equation} \label{eqn:hm_setting}
  {\cal A}\varphi := -\nabla \cdot \mathbb{A}(x)\nabla \varphi =f(x), \quad 
x=(x_1,\ldots,x_d)\in \Omega,
\end{equation}
endorsed with periodic boundary conditions on $\Gamma = \partial \Omega $. 
The diagonal $d\times d$ coefficient matrix $\mathbb{A}(x)$ is defined by
\[
 \mathbb{A}(x)= a(x) I_{d\times d}, \quad x\in\Omega,
\]
where the scalar piecewise constant function $a(x)>0$ is generated randomly for every 
stochastic realization defined by the size $L$ of RVE, such that it has many jumps in $\Omega$,
see Figure \ref{fig:2D_example_Check} for the example in 2D case. 
There are many computational approaches for solving 
elliptic PDEs with random input, see \cite{BrisLegoll:11,CaEhrLeStXi:18,BrisLeg:17,KKO:17} 
and references therein. In particular, 
in \cite{KKO:17} the fast elliptic problem solver in 2D case was applied to study  numerically
 the convergence properties of the stochastic homogenization techniques,
providing means to substitute the stochastic coefficient $\mathbb{A}(x)$ by its simple 
homogenized version $\bar{\mathbb{A}}_L\in \mathbb{R}^{2\times 2}$,
such that for large values of RVE size $L$ the average 
quantities over  the long sequence of stochastic realizations will be very close to its 
homogenized version.
\begin{figure}[htb]
\centering
\includegraphics[width=7.0cm]{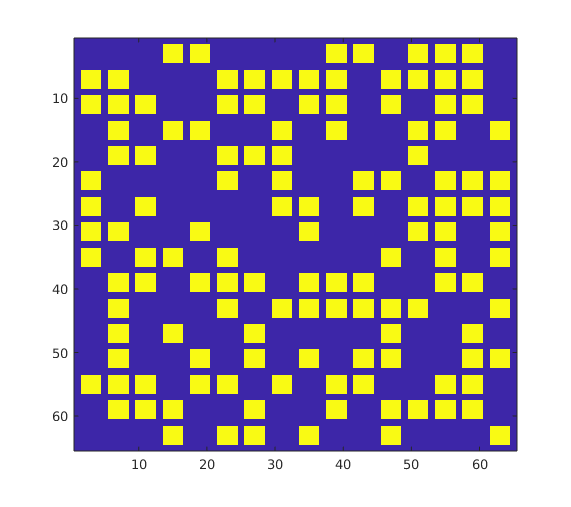}\quad
\includegraphics[width=7.2cm]{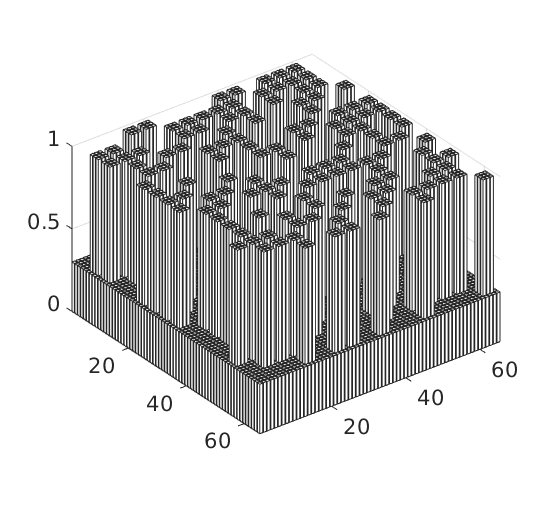} 
 \caption{\small Example of stochastic realization of coefficient for $L\times L$ lattice
 with $L=16$ with $\lambda=0.2$ and fixed contrast  parameter $\beta=0.8$.}   
\label{fig:2D_example_Check}
\end{figure}

In this paper, we describe the new discretization and solution scheme for 
solving the $d$-dimensional problems
with checkerboard type of random coefficients configuration $\mathbb{A}(x)$ for $d=2,3$.
We consider the sequence of $M$ stochastic realizations specifying the variable part 
in the $d\times d$ coefficient matrix
 $\widehat{\mathbb{A}}_m(x)$, $m=1,\ldots,M$. For ease of exposition, we discuss the 
 3D problems, $d=3$, and first, consider the case of constant scaling parameter $\lambda$.  
 Fixed coefficient $\widehat{\mathbb{A}}_m(x)$ and the scaling parameter $0 < \lambda\leq 1$,
 we solve the periodic  elliptic problems in $\Omega =[0,1)^3$,
 \begin{equation}\label{eqn:PDE_stoch_stand}
  {\cal A}_m \phi := -\nabla \cdot \mathbb{A}_m(x)\nabla \phi =f(x), \quad 
x=(x_1,\ldots,x_d)\in \Omega,
 \end{equation}
where the matrix-valued equation coefficient is specified by
 \[
 \mathbb{A}_m(x) = \lambda I_{d\times d}+ \beta \widehat{\mathbb{A}}_m(x)= a_m(x) I_{d\times d},
 \]
 with $\beta= 1-\lambda$, and the diagonal entry in $\mathbb{A}_n(x)$ is defined by
 \begin{equation}\label{eqn:coef_diag}
{a}_m(x) = \lambda + \beta \widehat{a}_m(x).
\end{equation}

 Notice that in the application to numerical estimation of the homogenized matrix, 
 see \cite{KKO:17}, the triple of 
 elliptic equation has to be solved for every stochastic realization. Specifically, 
 for $i=1,2,3$ the periodic  elliptic problems in $\Omega =[0,1)^3$,
 \begin{equation}\label{eqn:RHS_grad}
 -\lambda \Delta \Phi_i - \beta\nabla \cdot \widehat{\mathbb{A}}_m(\cdot) ({\bf e}_i + \nabla \phi_i)=0,
 \end{equation}
 where $\beta= 1-\lambda$, and the unit vectors ${\bf e}_i$, $i=1,2,3$, are given by 
 $${\bf e}_1=(1,0,0)^T, \quad {\bf e}_2=(0,1,0)^T, \quad {\bf e}_3=(0,0,1)^T.
 $$ 
 The right-hand side in (\ref{eqn:PDE_stoch_stand}), 
 rewritten in the canonical form (\ref{eqn:hm_setting}), is represented by 
 \[
  f_i(x)= \beta \nabla \cdot \widehat{\mathbb{A}}_m(x){\bf e}_i,
 \]  
where the diagonal coefficient 
is defined in terms of the scalar function $\widehat{a}_m(x)$,
 $\widehat{\mathbb{A}}_m(x)= \widehat{a}_m(x) I_{d\times d} $. 
Hence, we arrive at the representations for the right-hand sides
\begin{equation}\label{eqn:RHS_A_hom_m}
 f_1(x)= \beta\dfrac{\partial \widehat{a}_m(x)}{\partial x_1}, \quad
 f_2(x)=\beta \dfrac{\partial \widehat{a}_m(x)}{\partial x_2}, \quad
 f_3(x)=\beta \dfrac{\partial \widehat{a}_m(x)}{\partial x_3}.
\end{equation}
 

 Figure   \ref{fig:2D_example_Check} shows an example of stochastic realizations, which 
specify the locations of jumps in the equation coefficient $a(x)$ in 2D case for $L=16$.

In the previous simple model, we determine randomly the positions of jumps in the coefficients and
use the constant length for the stochastic inclusions $\beta=1-\lambda$, which we 
call by the {\it contrast constant}. 
Our scheme also applies to the case of varying contrast constant $\beta=\mu(x)$ that may
vary in the interval $\beta \in [0,\beta_0]$, $\beta_0=1-\lambda$, randomly for each realization.
Figures \ref{fig:2D_example_two_rand} and \ref{fig:2D_example_two_layer} illustrate the stochastic 
realizations of coefficients with two contrast parameters $\beta_1=0.3$ and $\beta_2=0.6$, 
which are randomly distributed or have layer type structure.
\begin{figure}[htb]
\centering
\includegraphics[width=7.0cm]{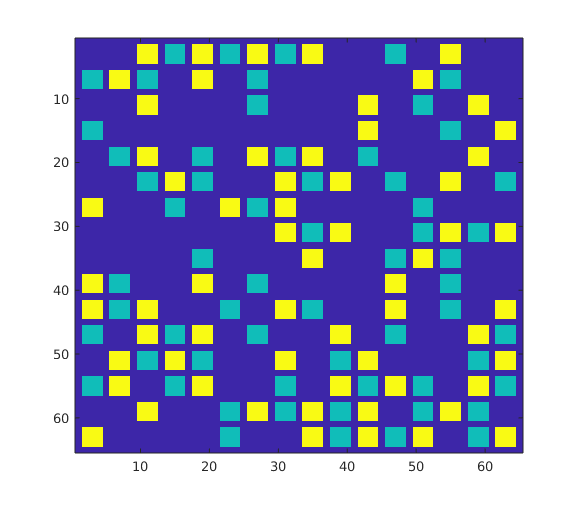}\quad
\includegraphics[width=7.2cm]{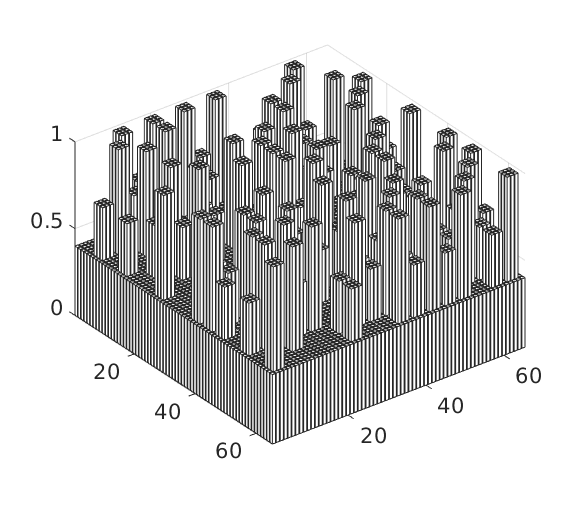} 
 \caption{\small Example of stochastic realization of coefficient for $L\times L$ lattice
 with $L=16$ with two contrast  parameters $\beta_1=0.3$ and $\beta_2 =0.6$.}   
\label{fig:2D_example_two_rand}
\end{figure}

\begin{figure}[htb]
\centering
\includegraphics[width=7.0cm]{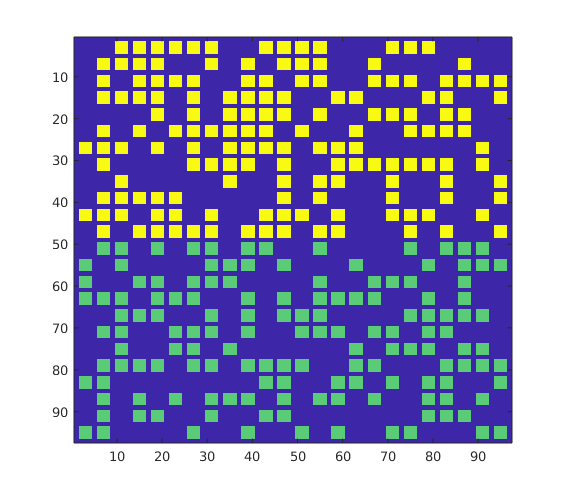}\quad
\includegraphics[width=7.2cm]{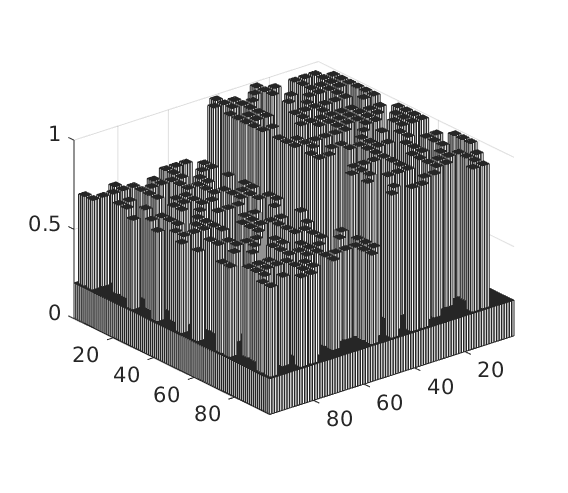} 
 \caption{\small Example of stochastic realization of coefficient for $L\times L$ lattice
 with $L=24$ with layer-type contrast  parameters $\beta=0.6$ and $\beta=0.3$.}  
\label{fig:2D_example_two_layer}
\end{figure}

 We are interested in the construction of fast numerical solution of the equation (\ref{eqn:RHS_grad})
with coefficients $\mathbb{A}(x)$, generated in the course of stochastic realizations.
In this problem setting the bottleneck task is   fast generation of the (large) FEM stiffness 
matrix in a sparse format, see \cite{KKO:17}, which should be re-calculated many thousands times
for   long sequences \cn of stochastic realizations. Here the computational challenges are twofold:
\begin{itemize}
 \item[(A)] Fine $n \times n \times n$-grids required for the resolution of coefficients on large 
 $L\times L\times L$ lattice.
 \item[(B)] Large number of stochastic realizations $M$ of the order of $10^4-10^5$ that are necessary for 
 reliable estimation of desired stochastic quantities.
\end{itemize}
In item (A) the construction of new FEM discretization and then generation of large stiffness 
matrix in data sparse format is required for every stochastic realization. 
Item (B) requires the fast iterative solver for the arising linear systems of equations, 
which should be robust with respect to 
the main model parameters $L$, $N=n^d$, and the random equation coefficients.
Our techniques suggest the effective approach for solving both problems (A) and (B). 

 \begin{figure}[htb]
\centering
\includegraphics[width=7.8cm]{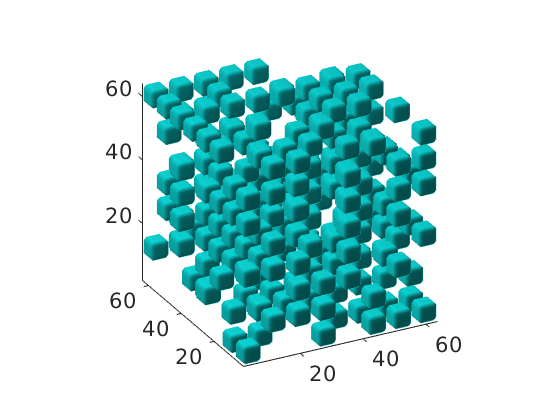}
\includegraphics[width=7.8cm]{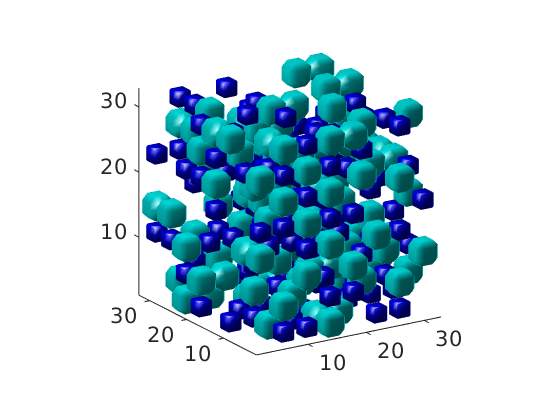}
 \caption{\small Example  of 3D stochastic coefficient on
   $L\times L\times L$ lattice  with $L=8$ with fixed contrast $\lambda$ (left), and two randomly 
   distributed contrast parameters (right).} 
   \label{fig:3D_coef_L8}
\end{figure}
Figure \ref{fig:3D_coef_L8} illustrates the configuration of the matrix coefficient visualized 
 for an example of 3D realizations on the  $L\times L \times L$ lattice, with $L=8$,  
   The number of inclusions is about $\frac{1}{2} L^3$.

In what follows, we describe both fast and memory-efficient discretization and solution method 
for the class of stochastic PDEs specified above, 
which allows the reliable numerical estimate of the mean (homogenized) constant coefficient in 
 the system   (\ref{eqn:RHS_grad}) for $d=2,3$     
 for rather large value of RVE $L$ and various model parameters at the limit of large $M \to \infty$, 
  see \cite{GlOtto:12,GlOtto:16,KKO:17}. This approach also allows to effectively estimate the average 
  solution of the 3D stochastic PDE (\ref{eqn:hm_setting}) with the given right-hand side $f(x)$ by using the 
  precomputed homogenized coefficient matrix.

 \section{Computational scheme for the stochastic average}
 \label{sec:Comp_Ahom}
 
 \subsection{Galerkin FEM discretization scheme }\label{ssec:Discret_Scheme} 
 
 The discretization scheme for the $d$-dimensional problem is constructed by FEM on tensor grid in 
 $\Omega \in \mathbb{R}^d$  similar to  the 2D case described in \cite{KKO:17}. We consider 
 the RVE approximation specified by the checkerboard-type realizations of
 the random coefficient field on the $L^{\otimes d}$ tensor product 
 lattice\footnote{That is $L\times L$ lattice for $d=2$, and $L\times L \times L$ lattice for $d=3$.}.
 This lattice is composed of $L^d$ unit cells $G_s$ such that
 \[
  \Omega = {\bigcup}_{s=1}^{L^d} G_s.
 \]
 In 2D case the FEM discretization and the construction of the Galerkin matrix 
 can be viewed as a special case of the more general scheme in \cite{KKO:17} based 
 on the ``overlapping'' type random realizations of the coefficient field.

 Given the number $n_0= 2^{p_0}$ with $p_0=2,3,4,...$, of the grid intervals specifying 
 the size a unit cell, 
  we introduce the uniform $n_1^{\otimes d}$ rectangular grid $\Omega_{h}$ in $\Omega=[0,1)^d$
 with the grid size $h=\frac{1}{n_1 -1}$, such that $n_1 = n_0 L +1$, i.e.,
 $h = \frac{1}{n_0 L}$. We assume that the ``unit cell'' $G_s$, $s=1,\ldots, L^d$, 
 includes the square ``unit sub-cell'' $S_s\subseteq G_s$ of size $(\frac{2\alpha}{L})^{\otimes d}$ 
 (that is  $\frac{2\alpha}{L} \times \frac{2\alpha}{L} \times \frac{2\alpha}{L}$ for $d=3$) 
 which adjusts the square grid $\Omega_{h}$, such that
 the center $c_s$ of $S_s$ is located at the center of $G_s$.
 The sub-cell $S_s$ denotes the region where the stochastic realization is allowed to 
 generate the jumping coefficient. The number of unit cells $K \leq L^d$ where the coefficient is perturbed 
 varies for different stochastic realizations.
 The overlap factor $0< \alpha \leq \frac{2^{p_0 -1}}{n_0}$ may take values 
 $\alpha \in \{\frac{1}{n_0}, \frac{2}{n_0},\ldots \frac{2^{p_0 -1}}{n_0}\}$ depending on the choice of 
 $p_0$.   
 In this construction the univariate size of the unit sub-cell $S_s$ varies as
 \[
  \frac{2\alpha}{L} = \frac{2\alpha n_0}{n_0 L} = k h, \quad \mbox{with } \quad
  k=2, 4, \ldots, n_0.
 \]
 In the presented numerical examples we normally use the overlap constant $\alpha =1/4$ or 
 $\alpha =1/2$.
 For $\alpha =1/2$, the maximal size of the unit sub-cell is given by $({1}/{L})^{\otimes d} $, which  
contains $n_0+1$ grid points in each spacial direction leading to $n_1^{\otimes d} $ rectangular
grid with $n_1=n_0 L +1$.

Fixed $L$, the FEM discretization of the elliptic PDE in (\ref{eqn:PDE_stoch_stand}) can be constructed, 
in general, on a sequence of dyadic refined grids by choosing $p_0=2,3, \ldots $, so that the increase 
of the parameter $p_0$ improves the accuracy of FEM approximation.

Given a finite dimensional space $X\subset H^1(\Omega)$ of tensor product piecewise linear finite elements 
$X = \mbox{span}\{ \psi_\mu(x) \}$ associated with the grid $\Omega_h$, with  $\mu=1,...,N_d $, $N_d=n_1^d$,
incorporating periodic boundary conditions,  we are looking for the traditional 
FEM Galerkin approximation of the exact solution in the form 
$$
\phi(x) \approx \phi_X(x)=\sum_{\mu=1}^{N_d} u_\mu \psi_\mu(x) \in X,
$$
where ${\bf u}=(u_1,\ldots,u_{N_d})^T\in  \mathbb{R}^{N_d}$ denotes the unknown coefficients vector.
Fixed realization of the coefficient $a^{(m)}(x)$, 
we define the Galerkin-FEM discretization in $X$ of the variational 
formulation of equation (\ref{eqn:PDE_stoch_stand}) by
\begin{equation}\label{eqn:FEM_Galerk}
 A {\bf u} = {\bf f}, \quad A = [a_{\mu \nu}]\in \mathbb{R}^{N_d\times N_d},\quad 
 {\bf f}=[f_\mu] \in  \mathbb{R}^{N_d}, 
\end{equation}
where the Galerkin-FEM stiffness matrix $A$ generated by the equation coefficient 
 ${\mathbb{A}}^{(m)}(x)$ is calculated by using the associated bilinear form
\begin{equation}\label{eqn:FEM_discr}
a_{\mu \nu} = \langle {\cal A} \psi_\mu,  \psi_\nu \rangle = 
\int_{\Omega}(\lambda \nabla \psi_\mu  \cdot \nabla \psi_\nu + 
\beta a^{(m)}(x) \nabla \psi_\mu \cdot \nabla \psi_\nu)dx, 
\end{equation}
and $f_\mu = \langle f, \psi_\mu \rangle$.

In specific application to numerical estimation of the homogenized coefficient matrix 
the corresponding right-hand side is defined 
\begin{equation}\label{eqn:FEM_discr_RHS}
 f_{\mu,i} = \langle f, \psi_\mu \rangle= \beta
 \int_\Omega \nabla \cdot \widehat{a}^{(m)}(x)\, {\bf e}_i \psi_\mu \, dx =
 - \beta \int_\Omega \widehat{a}^{(m)}(x) \frac{\partial \psi_\mu}{\partial x_i} dx, \quad i=1,2, \ldots,d.
\end{equation}

Corresponding to (\ref{eqn:coef_diag}) and (\ref{eqn:FEM_discr}), 
 we represent the stiffness matrix $A$ in the additive form
\begin{equation}\label{eqn:FEM_matrix}
A=\lambda A_\Delta + \beta \widehat{A}_s,
\end{equation}
where $A_\Delta$ represents the $N_d \times N_d$
FEM Laplacian matrix in periodic setting that has the standard $d$-term Kronecker product form. 
Here matrix $\widehat{A}_s$ provides the FEM approximation to the "stochastic part" 
in the elliptic operator corresponding to the coefficient $\widehat{a}^{(n)}(x)$, 
see (\ref{eqn:coef_diag}). The latter is determined by the sequence of
random coefficient distributions in the course of stochastic realizations, 
numbered by $m=1,\ldots,M$.

In the case of complicated jumping coefficients the stiffness matrix generation in the elliptic FEM 
usually constitutes the dominating part of the overall solution cost.
The asymptotic convergence of the stochastic homogenization process presupposes that
the equation (\ref{eqn:FEM_Galerk})
has to be solved many hundred or even thousand times, so that for every realization
one has to update the stiffness matrix $A$ and the right-hand side $\bf f$.

Our discretization scheme  computes  all matrix entries at a low cost
by assembling the local Kronecker products of sparse matrices obtained by representation of
$\widehat{a}^{(m)}(x)$ as a sum of separable functions.
This allows to store the resultant stiffness matrix in the sparse matrix format. 
Such a construction only includes the 
pre-computing of small tri-diagonal matrices representing 1D elliptic operators 
with jumping coefficients in periodic setting.
In the following sections, we shall describe the efficient construction 
of the "stochastic" term $\widehat{A}_s$ to be updated for every realization.

\subsection{Matrix generation by using Kronecker product sums}\label{ssec:FDM_Kron} 

To enhance the time consuming matrix assembling process 
we apply the FEM Galerkin discretization (\ref{eqn:FEM_discr}) of 
equation (\ref{eqn:PDE_stoch_stand}) by means of the 
tensor-product piecewise linear finite elements 
$$
\{\psi_{\boldsymbol{\mu}}(x):=\psi_{\mu_1}(x_1) \cdots \psi_{\mu_d}(x_d)\}, 
\quad {\boldsymbol{\mu}}=(\mu_1,\ldots,\mu_d), 
\quad \mu_\ell\in {\mathcal I}_\ell=\{1,\ldots,n_\ell\}, 
$$
for $\ell=1,\ldots,d,$
where $\psi_{\mu_\ell}(x_\ell)$ are the univariate piecewise linear hat 
functions. Notice that the univariate grid size $n_\ell$ is of the order of $n_\ell=O(1/\epsilon)$,
where the small homogenization parameter is given by $\epsilon\approx 1/(n_0 L)$,
designating the total problem size 
\[
N_d = n_1 n_2\cdots n_d=O(1/\epsilon^d).
\]
The $N_d\times N_d$ stiffness matrix is constructed by the standard mapping of the multi-index 
$ \boldsymbol{\mu}$
into the long univariate index $1\leq \mu \leq N_d$ for the active degrees of freedom in periodic setting. 
For instance,  we use the so-called big-endian convention for $d=3$ and $d=2$
\[
 {\boldsymbol{\mu}}\mapsto \mu:= \mu_3 + (\mu_2-1)n_3 + (\mu_1-1)n_2 n_3, 
 \quad {\boldsymbol{\mu}}\mapsto \mu:= \mu_2 + (\mu_1-1)n_2,
\]
respectively. 
We first consider the case $d=2$ in more detail. 

We calculate the stiffness matrix  
by assembling of the local Kronecker product terms by using representation of the ``stochastic part''
in the coefficient $\widehat{a}^{(m)}(x)$ as an $R$-term sum of separable functions.
 This leads to the linear system of equations
\begin{equation} \label{eqn:FEM_syst}
 A {\bf u} = {\bf f},
\end{equation}
constructed for the general $R$-term separable coefficient $a(x_1,x_2)$ with $R\geq 1$.

By simple algebraic transformations (e.g. by lamping of the mass matrices)
the matrix ${A}$ can be represented in the form (without loss of approximation order)  
\begin{equation} \label{eqn:Lapl_Kron_D}
 {A} \mapsto A = A_1 \otimes D_2 + D_1 \otimes A_2,
\end{equation}
 where $D_1, D_2$ are the diagonal matrices with positive entries, 
 and $\otimes$ means the Kronecker product of matrices,  see the discussion in \cite{KKO:17}.
 This representation applies, in particular, to the periodic Laplacian.
For example, in the case of anisotropic Laplacian 
the representation in (\ref{eqn:Lapl_Kron_D}) can be further simplified to 
\begin{equation*} \label{eqn:Lapl_Kron}
 {A} \mapsto B = \alpha_2 A_1 \otimes I_2 + \alpha_1 I_1 \otimes A_2,
\end{equation*}
which will be used as a prototype preconditioner for solving the target linear system (\ref{eqn:FEM_syst}).

Taking into account the rectangular structure of the grid, 
we use the simple finite-difference (FD) scheme for the matrix
representation of the Laplacian operator $\Delta$.
The scaled discrete Laplacian incorporating periodic boundary conditions takes the form
\begin{equation}\label{eqn:Lap_Kron}
 A_\Delta  = \Delta_{1,P} \otimes I_{n_2} + I_{n_1}\otimes \Delta_{2,P},
\end{equation}
where, say, in the variable $x_1$ we have
\[
 \Delta_{1,P} = -\mathrm{tridiag} \{ 1,-2,1 \} + P^{(1)} \in \mathbb{R}^{n_1\times n_1},
\] 
such that the entries of the "periodization" matrix $P^{(1)}\in \mathbb{R}^{n_1\times n_1}$ 
are all zeros except 
$$ 
P^{(1)}_{1,n_1}=P^{(1)}_{n_1,1}=1, \quad \mbox{and} \quad  P^{(1)}_{1,1}=P^{(1)}_{n_1,n_1}=-1,
$$ 
see (\ref{BC_Neu_peri}), right.
Here
$I_{n_1}\in \mathbb{R}^{n_1 \times n_1}$ is the identity matrix, $\Delta_{1,P}$ and $\Delta_{2,P}$
are the 1D finite difference Laplacians in variables $x_1$ and $x_2$, respectively  
(endorsed with the Neumann boundary conditions).
 We say that the Kronecker rank of the matrix $A$ in (\ref{eqn:Lap_Kron})
equals to $2$, $rank_{Kron}(A)=2$.

For the assembling of the stiffness we also need the 1D Laplacian with Neumann boundary conditions.
To that end we notice that the $n_1 \times n_1$ Laplacian matrices for the Neumann and periodic boundary 
conditions in the first 1D variable read as 
 \begin{equation}\label{BC_Neu_peri}
 \Delta_{1,N} = 
 \begin{bmatrix}
  -1 &  1 &  \cdots &  0 &  0\\
   1 & -2 & \cdots &  0 &  0 \\
   \vdots & \vdots & \ddots  &  \vdots & \vdots \\ 
   0 &  0 &  \cdots & -2 &  1 \\ 
   0 &  0 &  \cdots &  1 & -1 \\ 
 \end{bmatrix}
 \quad \mbox{ and }\quad
 \Delta_{1,P}=
  \begin{bmatrix}
  -2 &  1 &  \cdots &  0 &  1\\
   1 & -2 & \cdots &  0 &  0 \\
   \vdots & \vdots & \ddots  &  \vdots & \vdots \\ 
   0 &  0 &  \cdots & -2 &  1 \\ 
   1 &  0 &  \cdots &  1 & -2 \\ 
 \end{bmatrix},
  \end{equation}
respectively. 
 
In the $d$-dimensional setting we have the similar Kronecker rank-$d$ representations.
For example, in the case  $d=3$ the "periodic"  $N_d\times N_d$ Laplacian matrix 
$A_{\Delta}$ takes a form
\begin{equation} \label{eqn:Lapl_Kron3}
A_{\Delta} = A_{1,P} \otimes I_2\otimes I_3 + I_1 \otimes A_{2,P} \otimes I_3 + I_1 \otimes I_2\otimes A_{3,P},
 \end{equation} 
such that its Kronecker rank equals to $3$, while for the arbitrary $d\geq 3$, we have $rank_{Kron}(A)=d$.

\subsection{Fast matrix assembling for the stochastic part}\label{ssec:FDM_Kron_stoch} 
 
The Kronecker form representation of the "stochastic" term in (\ref{eqn:FEM_discr})
 further denoted by $\widehat{A}_s$ is more involved. 
 For the ease of exposition we, first, discuss the case $d=2$, and assume that $n_1=n_2$.
 
 For given stochastically chosen distribution 
 of non-overlapping cells $S_k$, $k=1,\ldots,K$, where the constant coefficient is perturbed,
 we introduce  the full covered  grid domain 
 $\widehat{G} = \cup^{K}_{k=1} S_k \subset \Omega$ colored by black in Figure \ref{fig:2D_example_Check}  
 and \ref{fig:Clust_vsL_Checkeboadr}. 
   We obtain a union
 of non-overlapping ``covered'' square cells $S_k$, $k=1,\ldots,K$, $K\leq L^2$, each of the grid-size  
 $\overline{n}_0\times \overline{n}_0$,
 \begin{equation}\label{eqn:NO-decomp}
  \widehat{G}= \cup^{K}_{k=1} S_k, \quad S_k \subseteq G_k,
 \end{equation}
 where the number $K$ varies for different realizations.
 By construction, we have $a(x)=1$ for $x\in \widehat{G}$ and $a(x)=\lambda$ for 
 $x\in \Omega  \setminus \widehat{G}$.
 Here $\overline{n}_0= 2^p+1$, for some
  $p=1,2,\ldots p_0$, is fixed as above by the chosen overlap constant $\alpha >0$, 
  see \S\ref{ssec:Discret_Scheme}.
 In this construction, the non-overlapping elementary cells $S_k$ for different $k$ are allowed to
 have the only common edges of size $\overline{n}_0$. 
 
 Notice that in the case of non-overlapping
 decomposition (\ref{eqn:NO-decomp}) the set of cells $\{S_k\}$ may coincide with  
 the initial set $\{G_s\}$ that allows to maximize the  size $\overline{n}_0\times \overline{n}_0$ 
 of each $S_k$, $k=1,\ldots,L^2$, 
 to the largest possible, i.e. to $\overline{n}_0= n_0 +1$. 
 We refer to \cite{KKO:17} for the construction in the case  of general overlapping coefficients.
  
 To finalize the matrix generation procedure for $\widehat{A}_s$, we define the local 
 $\overline{n}_0\times \overline{n}_0$
 matrices representing the discrete Laplacian with Neumann boundary conditions,
 \[
  \widehat{Q}_{\overline{n}_0}:= \mbox{tridiag} \{ 1,-2,1 \} + \mbox{diag}\{ 1,0,\ldots,0,1 \}
  \in \mathbb{R}^{\overline{n}_0\times \overline{n}_0},
 \]
and the diagonal matrix 
$$
\widehat{I}_{\overline{n}_0}:=\mbox{diag}\{ 1/2,1,\ldots,1,1/2 \}
\in \mathbb{R}^{\overline{n}_0\times \overline{n}_0},
$$
see the visualization in (\ref{BC_Neu_peri}), left. 
Here, we may select $\overline{n}_0=3,5,...$ that corresponds to the choice $p= 1,2,...$.
In the case of $\overline{n}_0 \times \overline{n}_0 $ matrix with minimal size 
$\overline{n}_0=3 $,  both discrete Laplacians in (\ref{BC_Neu_peri}) simplify to
\begin{equation}\label{matr_mini}
   \Delta_N=   
   \begin{bmatrix}
  -1 &  1 & 0  \\
   1 & -2 & 1  \\
   0 &  1 &  -1 \\ 
 \end{bmatrix}
 \quad \mbox{ and }\quad
      \Delta_P= 
      \begin{bmatrix}
  -2 & 1 &   1\\
   1 & -2 & 1  \\
   1 & 1 &  -2\\ 
 \end{bmatrix}.
 \end{equation}
 
 Let the subdomain $S_k$ be supported by the index set $I_k^{(1)}\times I_k^{(2)}$ 
of size $\overline{n}_0\times \overline{n}_0$ for $k=1,\ldots,K$.
Introduce the block-diagonal matrices $\overline{Q}_k \in \mathbb{R}^{n_1\times n_1}$
and $\overline{I}_k \in \mathbb{R}^{n_1\times n_1}$ by inserting matrices 
$\widehat{Q}_{\overline{n}_0}$ and $\widehat{I}_{\overline{n}_0}$, defined above,  
as diagonal blocks into $n_1\times n_1$ zero matrix in the positions 
$I_k^{(1)}\times I_k^{(1)}$ and $I_k^{(2)}\times I_k^{(2)}$, respectively.
Now the stiffness matrix $\widehat{A}_s$ is represented in the form of a Kronecker product sum as follows,
\begin{equation}\label{eqn:As_Kron}
 \widehat{A}_s= \sum^{K}_{k=1} (\overline{Q}_k \otimes \overline{I}_k + 
 \overline{I}_k \otimes \overline{Q}_k) + P^{(2)},
\end{equation}
where 
$$
P^{(2)}= P^{(1)} \otimes I_{n_1} + I_{n_1} \otimes P^{(1)} \in \mathbb{R}^{N_d\times N_d}
$$ 
is the "periodization" matrix in 2D.

In a $d$-dimensional case the representation (\ref{eqn:As_Kron}) generalizes 
to a sum of $d$-factor Kronecker products
\begin{equation}\label{eqn:As_Kron_d}
 \widehat{A}_s= \sum^{K}_{k=1} (\overline{Q}_k \otimes \overline{I}_k\otimes\cdots\otimes \overline{I}_k+
\ldots + \overline{I}_k \otimes \cdots \otimes \overline{I}_k\otimes \overline{Q}_k)
 + P^{(d)},
\end{equation}
where $P^{(d)}$ is the "periodization" matrix in $d$ dimensions, constructed 
as the $d$-term Kronecker sum similar to the case $d=2$.

The Kronecker product form of  (\ref{eqn:As_Kron}) and  (\ref{eqn:As_Kron_d})  leads  to
the corresponding Kronecker sum representation for the total stiffness matrix $A$. 
This allows an efficient implementation of 
the matrix assembly and low storage request for the stiffness matrix preserving the Kronecker sparsity. 
In general, for 2D case the number $K$ of elementary cells\footnote{For example, 
for cells of minimal size, $\overline{n}_0  \times \overline{n}_0$ 
with $\overline{n}_0=3$, as in (\ref{matr_mini}), we have $K=O(n_1^2)$.} 
does not exceed  $L^2$, and it may coincides with $L^2$
only in the case of non-overlapping decomposition $\widehat{G} = \cup^{L^2}_{k=1} S_k$
with maximal size $\overline{n}_0=n_0 +1$,
where different patches $S_k$ are allowed to have joint pieces of boundary, but no overlapping area.

The technical assumption that all sub-cells $S_k$ are supposed to be cell-centered is not 
essential for the presented construction. The approach also applies to the case of general 
location of $S_k$ inside of the corresponding unit cell $G_k$.

For the above constructions, which apply to any dimension $d$,
we are able to prove the following storage complexity and Kronecker rank estimates 
for the stiffness matrix $A$.
\begin{lemma}\label{lem:Matr_stor_Krank}
 The storage size for the stiffness matrix $A$  is bounded by
 \[
 Stor(A)\approx  Stor(\widehat{A}_s) = O(d \overline{n}_0 K + d n_1), \quad K\leq L^d.
 \]
 In the general case $d\geq 2$ the Kronecker rank of the matrix ${A}$ is bounded by
\[
 \mbox{rank}_{Kron} ({A}) \leq   K \leq L^d.
\]
In the case of cell-centered locations of subdomains $S_k$ 
(special case of geometric homogenization) there holds 
\[
\mbox{rank}_{Kron} ({A}) \leq L^{d-1}. 
\]
\end{lemma}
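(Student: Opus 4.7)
\emph{Proof plan.} All three claims follow from the explicit Kronecker representation in (\ref{eqn:As_Kron_d}) together with the sparsity pattern of the one-dimensional factors $\widehat{Q}_{\overline{n}_0}$ (tridiagonal with Neumann corners) and $\widehat{I}_{\overline{n}_0}$ (diagonal). My plan is to dispatch the three assertions in the listed order, since each is essentially a counting argument once the correct grouping of summands is in place.

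For the storage bound I would count nonzeros directly in (\ref{eqn:As_Kron_d}). Each of the small matrices $\widehat{Q}_{\overline{n}_0}$ and $\widehat{I}_{\overline{n}_0}$ carries $O(\overline{n}_0)$ entries, and the block-diagonal embeddings $\overline{Q}_k,\overline{I}_k\in\mathbb{R}^{n_1\times n_1}$ inherit this sparsity. For a fixed cell $k$ the $d$ Kronecker summands in (\ref{eqn:As_Kron_d}) reuse the same pair of factor matrices across slots, contributing $O(d\overline{n}_0)$ storage per cell. Summing over the $K\le L^d$ cells and adding the $O(dn_1)$ entries needed for the Kronecker-rank-$d$ Laplacian $A_\Delta$ and the periodization matrix $P^{(d)}$ yields $O(d\overline{n}_0 K+dn_1)$, with the stochastic part dominant, so $Stor(A)\approx Stor(\widehat{A}_s)$.

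For the general Kronecker-rank bound I would simply read off (\ref{eqn:As_Kron_d}) that each cell $k$ contributes its own rank-$d$ local discrete Laplacian, supported on a disjoint index block; aggregating by cell gives $K$ such contributions, and the Kronecker-rank-$d$ pieces $A_\Delta$ and $P^{(d)}$ are absorbed up to an $O(d)$ additive term that is negligible against $K$. The cell-centered refinement is the only step requiring an actual idea: here the key observation is that the one-dimensional factors $\overline{Q}_k^{(j)}$ and $\overline{I}_k^{(j)}$ in direction $j$ depend only on the coordinate $i_j$ of $S_k$ in that direction, so at most $L$ distinct factor pairs appear per direction. Splitting the sum by the slot $j$ that hosts $\overline{Q}$, and, for fixed $j$, grouping the active cells by the $(d-1)$-tuple of the remaining indices, the inner sum over $i_j$ can be pulled inside the $j$th Kronecker factor, producing a single summand
\[
\overline{I}^{(1)}_{i_1}\otimes\cdots\otimes\Bigl(\sum_{i_j:\,\chi(i_1,\dots,i_d)=1}\overline{Q}^{(j)}_{i_j}\Bigr)\otimes\cdots\otimes \overline{I}^{(d)}_{i_d}
\]
per $(d-1)$-tuple. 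Since at most $L^{d-1}$ such tuples can be active, the asserted bound follows, up to the suppressed directional factor $d$.

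The main obstacle is justifying this last pooling step. What makes the cell-centered case special is precisely the coordinate-wise alignment of the index sets $I_k^{(j)}$ (and hence of the factor matrices $\overline{Q}_k^{(j)},\overline{I}_k^{(j)}$) across all cells sharing a hyperplanar slice: non-overlap alone is not enough. Without this alignment, the inner sum over $i_j$ cannot be pushed into a single Kronecker slot, and one is forced back to the pessimistic cell-by-cell estimate $K$. Once this observation is isolated, the remaining bookkeeping is a straightforward pigeonhole on the $d-1$ transverse coordinates, and the rank claim for $A$ follows by noting that adding $A_\Delta$ contributes only $d$ further Kronecker terms, which is dominated by $L^{d-1}$ for $L\ge 2$.
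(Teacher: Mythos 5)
Your proposal is correct and follows essentially the same route as the paper: the storage and general rank bounds are read off directly from the Kronecker representation (\ref{eqn:As_Kron_d}), and the improved cell-centered bound is obtained by exploiting the coordinate alignment of the $S_k$ to agglomerate the cells along one coordinate direction into a single Kronecker factor per transverse $(d-1)$-tuple — the paper does exactly this via $L$ horizontal strips in 2D and states that $d\ge 3$ is analogous. Your version is, if anything, more explicit about the pooling step and about the suppressed directional factor $d$, which the paper also glosses over.
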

\begin{proof} 
 The first two estimates directly follow from the construction. To justify the improved rank 
 estimate in 2D case we notice that the $K$-term sum in (\ref{eqn:As_Kron}) can be simplified as follows.
 Introduce the $L$ horizontal grid strips ${\cal S}_\ell$, $\ell=1,...,L$, each of width $n_0$
 and agglomerate all the summands in (\ref{eqn:As_Kron}) with $S_k \subseteq {\cal S}_\ell$ into 
 one matrix $A_\ell$. It can be seen that $\mbox{rank}_{Kron} ({A}_\ell)=1$. Hence the equation
 \[
  A= \sum^{L}_{\ell=1} A_\ell
 \]
proves the result for $d=2$. The rank estimate in the case $d \geq 3$ can be derived completely 
similar.
\end{proof}
 
 The discretized equation (\ref{eqn:PDE_stoch_stand}) takes a form 
 \begin{equation}\label{eqn:FEM_syst1}
 A {\bf u} = {\bf f}, 
 \end{equation}
 where the FEM-Galerkin matrix $A$ generated by the equation coefficient 
 ${\mathbb{A}}_n(x)$ is calculated as described above (see \cite{KKO:17} for 
 the case of overlapping decompositions).
\begin{figure}[htb]
\centering
\includegraphics[width=4.6cm]{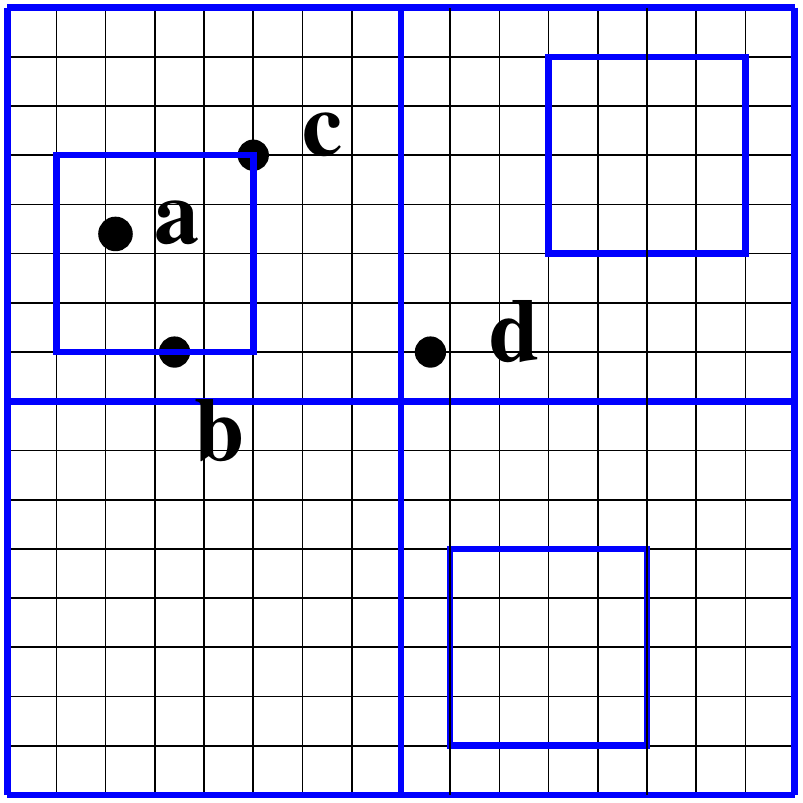} \hspace{2cm} 
\includegraphics[width=4.6cm]{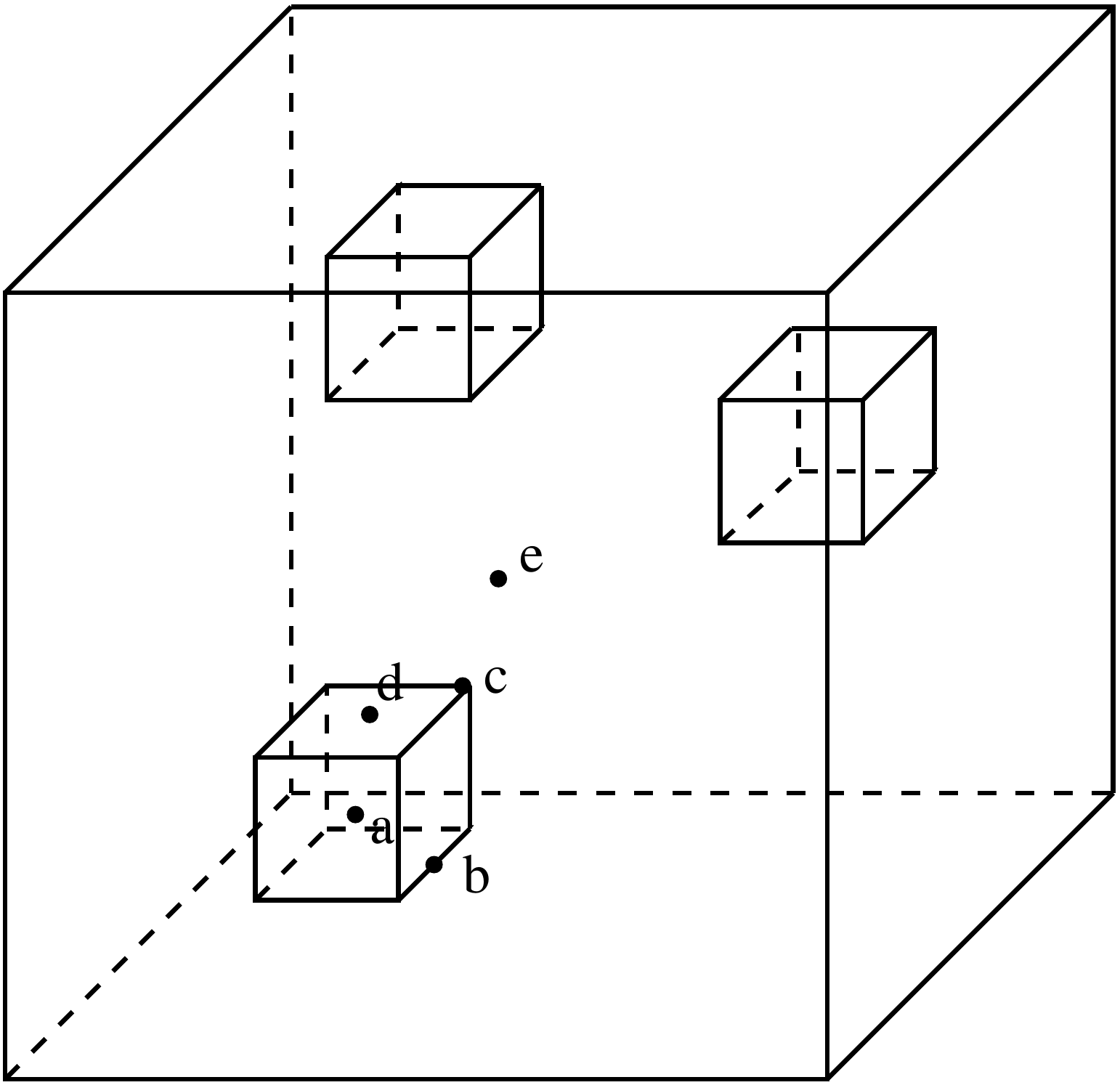}
\caption{\small Accounting jumping coefficients at grid points  (a), (b), (c) and (d) for 
the 2D problem depending on the location from the supports of the bump (left), 
and at points  (a), (b), (c), (d) and (e) for the 3D problem (right).}
\label{fig:Check_coef}
\end{figure}

Notice that in application to RVE approximation of the homogenized matrix one has to solve $d$ linear systems of equations
with different right-hand sides.
The corresponding vector representation ${\bf f}_i\in \mathbb{R}^N$, $i=1,\ldots,d$, of the right-hand side
$f_i(x)$ is computed by scalar multiplication of $f_i(x)$ with the corresponding 
Galerkin basis function and integration by parts, see (\ref{eqn:FEM_discr_RHS}).

Specifically, given the grid-point $x_h\in \Omega_h$, the corresponding  value of 
 the diagonal coefficient is defined by $a_m(x_h)$, see (\ref{eqn:coef_diag}).
In case $d=2$ the variable part $\widehat{a}_m(x_h)$, describing the jumping coefficient, is assigned
by $1$ for interior points in $\widehat{G}$,  
by $1/2$ for interface points (the angle equals to $\pi/2$), 
and by $1/4$ for the "exterior" corner of $\widehat{G}$ 
(the angle equals to $\pi/4$), see points (d), (b), (c) and (a) in Figure \ref{fig:Check_coef} (left), 
respectively. Figure \ref{fig:Check_coef} (left) corresponds to $L=2$, the discretization parameter $n_0=8$ 
and the periodic completion of the geometry. 

The corresponding illustration for the 3D case is presented in 
Figure \ref{fig:Check_coef} (right). 
In case of large number of representative volume elements, $L$,  
one observes the complicated interface defining the strongly jumping coefficients.

 \subsection{The RVE approximation of homogenized matrix}\label{ssec:Def_HomoMatr}

For given size of the RVE, $L$, we consider the sequence of problems 
for stochastic realizations specifying the variable part 
 in the $d\times d$ coefficient matrix  $\widehat{\mathbb{A}}^{(m)}(x)$,  $m=1,\ldots,M$,
 \begin{equation}\label{eqn:RHS_grad_2}
 -\lambda \Delta\phi_i -\beta\nabla \cdot \widehat{\mathbb{A}}^{(m)}(\cdot)
 ({\bf e}_i +\nabla \phi_i )=0,
 \end{equation}
 for $i=1,...,d$. 

Fixed $L$ and the particular realization $\mathbb{A}^{(m)}_L(x)= \lambda \, I + 
\beta\widehat{\mathbb{A}}^{(m)}(x) $, 
the averaged coefficient matrix 
$\bar{\mathbb{A}}^{(m)}_L =[\bar{a}^{(m)}_{L,ij}]\in \mathbb{R}^{d\times d}$, 
$i,j=1,\ldots,d$, with the constant entries is conventionally defined by the equation
\begin{equation}\label{eqn:A_hom_m}
 \bar{\mathbb{A}}^{(m)}_L  {\bf e}_i =\int_\Omega \mathbb{A}^{(m)}_L (x)({\bf e}_i + 
 \nabla \phi_i ) dx,
\end{equation}
which implies the representation for matrix elements
\[
 \bar{a}^{(m)}_{L,ij} \equiv \bar{a}^{(m)}_{ij} = 
 \int_\Omega [(\lambda I_{d \times d} +   \beta\widehat{\mathbb{A}}^{(m)} (x))
 ({\bf e}_i  + \nabla \phi_i )]_j dx,  \quad i,j=1,\ldots,d.
 \]

The latter leads to the entry-wise representation of the homogenized matrix 
$\bar{\mathbb{A}}^{(m)} =[\bar{a}^{(m)}_{i j}]$, $i,j=1,\ldots,d$,
\begin{align} \label{eqn:Ahom3_m_entry}     
    &   \bar{a}^{(m)}_{i i} = \int_\Omega a_n(x) \Bigl(\dfrac{\partial \phi_i}{\partial x_i} +1\Bigr) dx
    \quad i=1,\ldots,d,\nonumber \\
    & \bar{a}^{(m)}_{i j}= \int_\Omega a_n(x) \dfrac{\partial \phi_i}{\partial x_j}  dx
 \quad i,j=1,\ldots,d, \;\; i\neq j.  
\end{align}

Taking into account (\ref{eqn:coef_diag}) and (\ref{eqn:RHS_A_hom_m}), we obtain the 
computationally convenient representation
\begin{align} \label{eqn:Ahom3_m_entry_comp}     
   \bar{a}^{(m)}_{i i} &= \lambda + \beta\int_\Omega \widehat{a}_m(x)dx -
    \beta\int_\Omega \dfrac{\partial \widehat{a}_m(x)}{\partial x_i} \,\phi_i dx  \nonumber \\
    & =
    \lambda + \beta\int_\Omega \widehat{a}_m(x)dx - \int_\Omega f_i \,\phi_i dx,
    \quad i=1,\ldots,d, \nonumber \\
   \bar{a}^{(m)}_{i j}  & = - \beta\int_\Omega \dfrac{\partial \widehat{a}_m(x)}{\partial x_j} \phi_i  dx=
    -\int_\Omega f_j \, \phi_i  dx, 
    \quad i,j=1,\ldots,d, \;\; i\neq j.  
\end{align}

The representation (\ref{eqn:Ahom3_m_entry_comp}) ensures the symmetry of the homogenized matrix 
$\bar{\mathbb{A}}^{(m)}$, i.e. $\bar{a}^{(m)}_{ij}=\bar{a}^{(m)}_{ji}$, 
taking into account the equations (\ref{eqn:RHS_grad}), 
see \cite{KKO:17}, \S4.2, for the detailed argument. 
  
In numerical implementation, we apply the same variational scheme for calculation (by numerical integration)
of $\bar{\mathbb{A}}^{(m)}$  as for the FEM discretization 
of the operator and the right-hand side in the initial elliptic PDE, thus preserving the symmetry of the matrix 
$\bar{\mathbb{A}}^{(m)}$ inherited from the initial variational formulation.
 
Furthermore, integrals over $\Omega$ in (\ref{eqn:Ahom3_m_entry_comp}), 
as they are written for the exact matrix entries $\bar{a}^{(m)}_{ij}$,  $i,j=1,\ldots,d$,
are calculated (approximately) by using the discrete representation of integrand
on the grid $\Omega_h$.

\section{Construction of preconditioner for PCG iteration}\label{sec:PCG} 

\subsection{Spectral equivalent preconditioner}\label{ssec:Precond}
 
 Let the right-hand side in (\ref{eqn:hm_setting}) satisfy $\langle {\bf f}, 1 \rangle=0$, 
 then for a fixed realization $n$, the equation 
 \begin{equation}\label{eqn:DD_Eqn}
  A_m {\bf u} =(\lambda A_\Delta + \beta A_{s,m}){\bf u} = {\bf f},
 \end{equation}
 where $A_\Delta=\Delta_h$ is the periodic Laplacian, has the unique solution. 
 We solve this equation by the preconditioned conjugate gradient (PCG) 
iteration   
with the preconditioner obtained as inverse of the (perturbed) periodic Laplacain 
\[
 B_\Delta = \frac{1+\lambda}{2} A_\Delta + \delta I, 
\]
where  $I$ is the $N_d\times N_d$ identity matrix and
$\delta \geq 0$ is a small regularization parameter introduced for  
stability reasons in the case of direct inversion of $B_\Delta$ (that is too costly in 3D case). 
In what follows, for 3D case, we set up $\delta = 0$
and use the explicit low Kronecker rank approximation of the 
pseudo-inverse matrix $B_\Delta^{+}$ as the preconditioner for   
solving the algebraic system of equations (\ref{eqn:DD_Eqn}) 
on the subspace $\langle{\bf u},1 \rangle =0$. 
Notice that by the conventional definition we have $B_\Delta^{+} B_\Delta {\bf u} ={\bf u} $
for all ${\bf u}$ such that $\langle{\bf u},1 \rangle =0$, while $B_\Delta^{+} 1 =0$.
Since the pre-factor $\frac{1+\lambda}{2}$ does not effect the condition number of 
the preconditioned matrix, in the following discussion we set up it as $1$. 

It can be proven that the condition number of preconditioned matrix is uniformly 
bounded in $n_1$ and $L$. The following Lemma proves the 
spectral equivalence of the preconditioner, see also \cite{KKO:17}.
\begin{lemma}\label{lem:Cond_Number}
 Given the matrix $B_\Delta$ with $\delta=0$, then for any stochastic realization
 the condition number of the preconditioned matrix $B_\Delta^{+} {A}_n$ 
 on the kernel of the stiffness matrix ${A}_n$, $\langle{\bf u},1 \rangle =0$, 
is uniformly bounded in $n_1$, $L$ and $\alpha$, such that
\[
 cond \{ B_\Delta^{+}{A}_m \} \leq  C{\lambda}^{-1}.
\]
\end{lemma}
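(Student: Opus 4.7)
The plan is to establish the two-sided Loewner-order inequality
\[
\lambda\, A_\Delta \;\preceq\; A_m \;\preceq\; A_\Delta
\]
on the orthogonal complement $V_0 = \{\mathbf{v} \in \mathbb{R}^{N_d} : \langle \mathbf{v}, \mathbf{1}\rangle = 0\}$ of the constant vector, and then to read off the conclusion directly from the min-max characterization of generalized eigenvalues.

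First, I would use the additive splitting (\ref{eqn:FEM_matrix}) together with the coefficient representation $a_m(x) = \lambda + \beta\,\widehat{a}_m(x)$ in (\ref{eqn:coef_diag}), where $\widehat{a}_m(x) \in [0,1]$ pointwise and $\beta = 1-\lambda$. Writing the bilinear forms explicitly, $\mathbf{v}^T A_\Delta \mathbf{v} = \int_\Omega |\nabla v_h|^2\,dx$ and $\mathbf{v}^T \widehat{A}_s \mathbf{v} = \int_\Omega \widehat{a}_m(x)\,|\nabla v_h|^2\,dx$, where $v_h \in X$ is the finite element function with nodal vector $\mathbf{v}$. Pointwise comparison of the integrands gives $0 \preceq \widehat{A}_s \preceq A_\Delta$ in the Loewner order. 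Substituting this into $A_m = \lambda A_\Delta + \beta \widehat{A}_s$ and using $\beta = 1-\lambda$ yields
\[
\lambda\, \mathbf{v}^T A_\Delta \mathbf{v} \;\leq\; \mathbf{v}^T A_m \mathbf{v} \;\leq\; (\lambda + \beta)\, \mathbf{v}^T A_\Delta \mathbf{v} \;=\; \mathbf{v}^T A_\Delta \mathbf{v}
\]
for every $\mathbf{v} \in \mathbb{R}^{N_d}$, uniformly in $n_1$, $L$, $\alpha$, and the particular realization.

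Second, I would verify that $A_\Delta$ and $A_m$ share the same kernel, namely $\mathrm{span}(\mathbf{1})$: for $A_\Delta$ this is the standard property of the periodic FEM Laplacian; for $A_m$ it follows from $a_m(x) \geq \lambda > 0$, which forces $\mathbf{v}^T A_m \mathbf{v} \geq \lambda \int_\Omega |\nabla v_h|^2 dx$, vanishing only on constants. Hence the pseudo-inverse $B_\Delta^+ = A_\Delta^+$ (with the cosmetic prefactor $\tfrac{1+\lambda}{2}$ set to $1$) acts as the genuine inverse on $V_0$, and the preconditioned operator $B_\Delta^+ A_m$ is well-defined and self-adjoint with respect to the $A_\Delta$-inner product on $V_0$.

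Finally, the inequality above translates into the generalized eigenvalue bound $\mu_i(A_m,A_\Delta) \in [\lambda, 1]$ for all eigenvalues corresponding to eigenvectors in $V_0$, so that
\[
\mathrm{cond}\bigl(B_\Delta^+ A_m\bigr)\big|_{V_0} \;=\; \frac{\mu_{\max}}{\mu_{\min}} \;\leq\; \frac{1}{\lambda},
\]
which is the claim with $C=1$. I expect the only point that requires a careful sanity check to be the Loewner inequality $\widehat{A}_s \preceq A_\Delta$ when the mass-lumping/FD simplification leading to (\ref{eqn:Lapl_Kron_D}) is applied: one must confirm that the algebraic transformations used to obtain the Kronecker-structured form preserve the underlying integral representation of the quadratic form (i.e.\ that the lumped scheme still coincides with $\int c(x)|\nabla v_h|^2 dx$ with the same coefficient $c = \widehat{a}_m$), which is the case for the standard construction invoked here.
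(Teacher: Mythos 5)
Your proof is correct and follows essentially the same route as the paper: both arguments reduce to the pointwise bounds $\lambda \le a_m(x)\le 1$ and the resulting spectral equivalence of $A_m$ with the periodic Laplacian on the complement of constants. The paper merely packages the same two-sided estimate via the average coefficient $a_0=\tfrac{1+\lambda}{2}$ and the bound $\tfrac{1+q}{1-q}$ with $q=\max(a^+-a_0)/a_0=\tfrac{1-\lambda}{1+\lambda}$, which evaluates to exactly your $1/\lambda$.
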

 \begin{proof}
 The particular bound on the condition number in terms of a parameter $\lambda$
can be derived by introducing the average coefficient
$$
{a}_0(x)=\frac{1}{2}(a^+(x) + a^-(x)),
$$ 
where $a^+(x)$ and $a^-(x)$ are chosen as {\it majorants and minorants} 
of $a_m(x)$ in (\ref{eqn:coef_diag}), respectively.
Indeed, the preconditioner $B^{+}_0$ generated by the coefficient
${a}_0(x)=\frac{1}{2}(a^+(x) + a^-(x))$ allows the condition number estimate
\[
 cond \{B^{+}_0 {A}_n \} \leq C \max\frac{1+q}{1-q}, \quad\mbox{with}\quad
 q:=\max(a^+(x) - {a}_0(x))/{a}_0(x)<1
\]
on the subspace $\langle{\bf u},1 \rangle =0$.
 With the choice $\delta=0$, the preconditioner $B_\Delta^{+}$ corresponds to  $a^+(x)=1$ 
 and $a^-(x)=\lambda$,
 hence, we obtain ${a}_0(x)=\frac{1+\lambda}{2}$ and the result follows.
 \end{proof}
 
The PCG solver for the system of equations (\ref{eqn:FEM_syst1}) with the pseudo-inverse of 
the discrete Laplacian 
as the preconditioner demonstrates robust convergence with the rate $q < 1$ 
almost uniformly in the model and discretization parameters  $L, \lambda, \alpha$ and the grid size  $n_1$.

 Recall that the periodic Laplacian obeys the Kronecker rank-$d$ representation, 
 that is the $d$-level circulant matrix, and, hence, it can be diagonalized by the Fourier transform. 
 In particular, in the 2D case the periodic Laplacian takes form (\ref{eqn:Lap_Kron}).
 In the case  $d=3$, the $N_d\times N_d$  "periodic" Laplacian matrix $A_{\Delta}$ 
 is the three-term Kronecker sum (\ref{eqn:Lapl_Kron3})
and similar for the $d$-term representation in the general case $d\geq 3$.
 
 In the rest of this section, we discuss the main details of our reconditioned PCG scheme for periodic 
 setting  that relies on the 
 low Kronecker rank approximation of the pseudo-inverse matrix $B_\Delta^{+}$. 
 In this scheme the CG iteration applies to the preconditioned system of equations in
 (\ref{eqn:DD_Eqn}) as follows
 \begin{equation}\label{eqn:PCG_DD_Eqn}
  B_\Delta^{+} A_n {\bf u} = B_\Delta^{+}{\bf f},
 \end{equation}
 that is solved on the subspace $\langle{\bf u},1 \rangle =0$ due to periodic setting.
 
 Notice that in the case of
 homogeneous Dirichlet problem such a scheme with the choice $B_\Delta^{-1}=A_\Delta^{-1}$ was 
 described in \cite{HeidKh2Sch:18}. In \S\ref{ssec:PCG_lowrank} we modify this
 construction to the periodic setting.

\subsection{Low Kronecker rank approximation  of the preconditioner}\label{ssec:PCG_lowrank}

The rank-structured preconditioner presented here for periodic setting is obtained by a modification of the
construction described in \cite{HeidKh2Sch:18} for the Dirichlet boundary conditions, 
see also \cite{SchmittKh2Sch:20} where the case of anisotropic Laplacian is considered.

In the periodic setting, the matrix $A_{\Delta}$ can be diagonalized in the $d$-dimensional Fourier basis, 
which implies the factorization (for example, in 2D case)
\begin{equation}\label{eqn:DiagLaplace2D}
\begin{split}
A_{\Delta} &=  (F_1^*\otimes F_2^*) (\varLambda_1\otimes I_2) (F_1\otimes F_2)
	+(F_1^*\otimes F_2^*) (I_1\otimes \varLambda_2) (F_1\otimes F_2)\\
	&= (F_1^*\otimes F_2^*) \varLambda
	(F_1\otimes F_2 ),
\end{split}
\end{equation}
with the diagonal matrix 
\[
\varLambda:= \varLambda_1\otimes I_2 + I_1\otimes \varLambda_2\in \mathbb{R}^{n_1^2 \times n_1^2}.
\]
Furthermore, in 3D case we have
\begin{equation}\label{eqn:DiagLaplace3D}
\begin{split}
A_{\Delta} =  &(F_1^*\otimes F_2^*\otimes F_3^*) (\varLambda_1\otimes I_2\otimes I_3) (F_1\otimes F_2 \otimes F_3)\\
	&+(F_1^*\otimes F_2^*\otimes F_3^*) (I_1\otimes \varLambda_2\otimes I_3) (F_1\otimes F_2 \otimes F_3)\\
	&+(F_1^*\otimes F_2^*\otimes F_3^*) (I_1\otimes I_2\otimes \varLambda_3) (F_1\otimes F_2 \otimes F_3)\\
	=\; &(F_1^*\otimes F_2^*\otimes F_3^*) \varLambda
	(F_1\otimes F_2 \otimes F_3),
\end{split}
\end{equation}
with the diagonal matrix $\varLambda$ given by
\begin{equation}\label{eqn:Lamda3D}
 \varLambda:= \varLambda_1\otimes I_2\otimes I_3 + I_1\otimes \varLambda_2\otimes I_3 + 
	I_1\otimes I_2\otimes \varLambda_3 \in \mathbb{R}^{n_1^3 \times n_1^3}.
\end{equation}
 Here the  $n_1 \times n_1$ diagonal matrices 
 $\varLambda_\ell=\mbox{diag}\{F_\ell({\bf p})\} \in \mathbb{R}^{n_1 \times n_1}$, $\ell=1,\ldots,d$, 
 are defined by the Fourier transform $F_\ell({\bf p})\in \mathbb{R}^{n_1}$ of the first column vector 
 $$
 {\bf p}= (2,-1,0,\ldots,0,-1)^T\in \mathbb{R}^{n_1}
 $$ 
 in the circulant matrix $\Delta_P$  in (\ref{BC_Neu_peri}), diagonalized by the Fourier transform
 \begin{equation}\label{eqn:Circ_Fourier}
  \Delta_P = F_\ell^* \, \mbox{diag} \{ F_\ell({\bf p}) \}F_\ell  = 
  F_\ell^* \, \mbox{diag} \{ \lambda_1,\ldots, \lambda_{n_1} \}F_\ell.
 \end{equation}
 Here  $\lambda_i= [F_\ell({\bf p})]_i$, $i=1,\ldots,n_1$,
 denote the eigenvalues of the periodic Laplacian $\Delta_P$, such that $\lambda_1=0$.
 The latter property introduces the significant difference between the periodic case and 
 the case of Dirichlet boundary conditions considered in \cite{HeidKh2Sch:18}.
 
 The representations \eqref{eqn:DiagLaplace2D} and (\ref{eqn:DiagLaplace3D}) 
 give rise to the eigenvalue decomposition of $A_{\Delta}$. 
 Therefore, for a function $\mathcal{F}$ applied to the matrix $A_{\Delta}$, we arrive at  
\begin{equation}\label{eqn:DiagFLaplace2D}
\mathcal{F}(A_{\Delta})  = (F_1^*\otimes F_2^*) \mathcal{F}(\varLambda) (F_1\otimes F_2 ),
\end{equation}
and 
\begin{equation}\label{eqn:DiagFLaplace3D}
\mathcal{F}(A_{\Delta})  = (F_1^*\otimes F_2^*\otimes F_3^*) 
	\mathcal{F}(\varLambda) (F_1\otimes F_2\otimes F_3 ),
\end{equation}
for 2D and 3D cases, respectively. In our application we have to approximate the diagonal 
matrix $\mathcal{F}(\varLambda)=\varLambda^{-1} $ in the case of periodic Laplacian, where
$A_{\Delta}$ has zero eigenvalue.

To construct the efficient preconditioner for the system matrix 
$A_n$, we are particularly interested in the low Kronecker rank 
approximation of the pseudo-inverse matrix $A_{\Delta}^+$ 
that provides the spectrally close approximation to $\mathcal{F}(A_n)=A_n^{+}$. 
Since in the case of periodic boundary conditions the matrix $A_n$
has zero eigenvalue corresponding to the constant vector, 
one has to consider the pseudo-inverse matrix $A_{\Delta}^+$ instead of 
the standard inverse $A_{\Delta}^{-1}$ that means the matrix $\varLambda^{+}$
should be defined by setting the first element in $\varLambda^{-1} $ to zero.

We discuss two different solution strategies for multiple solving the target 
 algebraic system of linear equations in  (\ref{eqn:DD_Eqn}). 
 
 {\it The first approach} is based on the FFT diagonalization  
 of pseudo-inverse preconditioning matrix $A_\Delta^+$ by using the pseudo-inverse $\varLambda^{+}$. 
 In this case the system matrix 
 is stored in the standard sparse matrix format and the PCG solution process is performed 
 in the exact matrix arithmetics.
 
 {\it In the second approach}, we
 represent both the system matrix and preconditioner in the low Kronecker rank  form
 and store both large matrices $A_n$ and  $A_\Delta^+$  as a  small set of thin Kronecker factor matrices
 which allows to implement the PCG iteration on the low parametric manifold of low-rank discretized functions.

Here we briefly discuss the second approach.
In the 3D case, the elements of the core diagonal matrix $\varLambda$ in (\ref{eqn:Lamda3D}) 
can be represented as a three-tensor 
$$
{\bf G}=[g(i_1,i_2,i_3)]\in \mathbb{R}^{n_1 \times n_2 \times n_3}, \quad i_\ell\in \{1,\ldots,n_\ell\},
$$
where
\[
 g(i,j,k)=\lambda_{i} + \lambda_{j} + \lambda_{k},
\]
implying that ${\bf G}$ has the exact rank-$3$ decomposition. Here the periodicity conditions
imply $\lambda_{1}=0$ such that $g(1,1,1)=0$.
In the case $d=2$ we have 
the two-term sum representation, $g(i,j)=\lambda_{i} + \lambda_{j} $. We further set $n_1 = n_2 = n_3 =n$.

In the 3D case we consider the low-rank decomposition of the $n\times n \times n$ 
reciprocal core tensor obtained by folding of the pseudo-inverse matrix $\Lambda^+$,
\begin{equation}\label{eq:coreTens}
 {\bf G}^+=[g_+(i,j,k)]\approx [\frac{1}{g(i,j,k)}],    
\end{equation}
with entries defined for $i,j,k=1,\ldots,n$, by plugging zero value in the element $g_+(1,1,1)$,
and leaving the rest of the elements in the reciprocal tensor $[\frac{1}{g(i,j,k)}]$ unchanged,
 \begin{equation}  \label{eq:3a}
 g_+ (1,1,1) =0 \quad \mbox{and}\quad
 g_+ (i,j,k) = \frac{1}{\lambda_i +\lambda_j +\lambda_k}\quad \mbox{otherwise}.
 \end{equation} 

The theory on the low-rank tensor approximation of the target tensor ${\bf G}^+$ is based on
the integral Laplace transform representation of the discrete function 
$g_+(i,j,k)=(\lambda_{i} + \lambda_{j} + \lambda_{k})^{-1}$  for $i+j +k >3$
\begin{equation} \label{eqn:Laplace_transf_rhom1}
 g_+(i,j,k)=\frac{1}{\pi} \int_0^\infty e^{- (\lambda_{i} + \lambda_{j} + \lambda_{k}) t}\, dt,
 \quad 
\end{equation}
under the conditions
\begin{equation} \label{eqn:condPositive}
 \lambda_{i} + \lambda_{j} + \lambda_{k} \geq a >0, \quad i,j,k=1,\ldots,n, \quad \mbox{for} \quad i+j +k >3,
\end{equation}
which is satisfied for our construction.

Given $\varepsilon >0$, under the condition (\ref{eqn:condPositive}) there exist 
the rank-$R$ canonical $\varepsilon$-approximation of the tensor 
${\bf U}=[\frac{1}{\lambda_i +\lambda_j +\lambda_k}]$ with $R=O(|\log \varepsilon |)$, 
see for example \cite{KhorBook:17} for more details. 
Then we split the target tensor ${\bf G}^+$ into the sum
\[
 {\bf G}^+= {\bf G}_1 +  {\bf G}_{m-1}, 
\]
where
\[
 {\bf G}_1 = {\bf G}^+ \quad \mbox{for} \quad i=1,\; j,k=1,\ldots,m, \quad \mbox{and} 
 \quad  {\bf G}_1 =0 \quad  \mbox{otherwise},
\]
while 
\[
 {\bf G}_{n-1} = {\bf G}^+ - {\bf G}_1.
\]
Now applying the result for tensors of the type ${\bf U}$ to each of components separately we easily obtain
the rank bounds 
$$
\mbox{rank}({\bf G}_{n-1}) \leq R \quad  \mbox{and}\quad  \mbox{rank}({\bf G}_{1}) \leq R+1,
$$
thus implying the desired upper bound on the canonical rank 
$$
rank({\bf G}^+) \leq 2R +1.
$$
This argument proves the following
\begin{lemma}\label{lem:RankPinverse}
 Given $\varepsilon >0$, under the condition (\ref{eqn:condPositive}) there exist 
the rank-$R$ canonical $\varepsilon$-approximation of the 
tensor $[\frac{1}{\lambda_i +\lambda_j +\lambda_k}]$. 
The target tensor ${\bf G}^+$ can be approximated by a rank $2R +1$ canonical decomposition
with accuracy $2 \varepsilon $, where $R=O(|\log \varepsilon |)$. 
\end{lemma}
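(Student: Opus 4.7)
The plan is to reduce the three-dimensional tensor approximation problem to the well-understood one-dimensional sinc-quadrature approximation of $1/s$ on an interval bounded away from zero. First I would handle the generic part of the tensor, namely $\mathbf{G}_{n-1}$, which by construction is supported on the index set where $i+j+k>3$. On this set the positivity condition (\ref{eqn:condPositive}) guarantees $\lambda_i+\lambda_j+\lambda_k \geq a > 0$, so the Laplace transform representation (\ref{eqn:Laplace_transf_rhom1}) is valid pointwise. Applying an exponentially convergent sinc-quadrature (say the standard construction used to obtain rank bounds for $1/s$ on $[a,\Lambda]$) to this integral yields, for prescribed tolerance $\varepsilon>0$, a quadrature sum
\begin{equation*}
\frac{1}{\lambda_i+\lambda_j+\lambda_k} \;\approx\; \sum_{k=1}^{R} w_k\, e^{-t_k \lambda_i}\, e^{-t_k \lambda_j}\, e^{-t_k \lambda_k},
\end{equation*}
with $R=O(|\log\varepsilon|)$ positive nodes/weights, whose entrywise error is bounded by $\varepsilon$. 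Since each summand is a pure product in the three indices, this exhibits $\mathbf{G}_{n-1}$ as a canonical-rank $R$ tensor up to accuracy $\varepsilon$.

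Next I would treat the slice $\mathbf{G}_1$, which is nonzero only on the face $i=1$. On this face $\lambda_1=0$, so the nonzero entries reduce to values of the two-dimensional function $1/(\lambda_j+\lambda_k)$ for $j+k>2$, together with the single prescribed value $g_+(1,1,1)=0$. For the two-dimensional part I would again invoke the Laplace transform $1/(\lambda_j+\lambda_k)=\frac{1}{\pi}\int_0^\infty e^{-(\lambda_j+\lambda_k)t}\,dt$ and use the same sinc-quadrature, obtaining a canonical rank $R$ approximation in the $(j,k)$ variables; embedding this as a three-way tensor concentrated on $\{i=1\}$ multiplies each term by the unit vector $e_1$ in the first mode, preserving separability and rank $R$. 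Finally, the exceptional entry $g_+(1,1,1)=0$ is already handled by zeroing out the $(1,1,1)$ position. To correct the face value at $(1,1,1)$, if needed after combining, one adds a single rank-one term $c\,e_1\otimes e_1\otimes e_1$, giving $\mathrm{rank}(\mathbf{G}_1)\leq R+1$.

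Summing the ranks yields $\mathrm{rank}(\mathbf{G}^+)\leq 2R+1$, and the triangle inequality on the two componentwise errors gives the overall $2\varepsilon$ bound, each with $R=O(|\log\varepsilon|)$. The main technical obstacle I expect is the careful bookkeeping around the zero eigenvalue $\lambda_1=0$: one must verify that the lower quadrature endpoint can still be chosen so that the sinc-quadrature error remains $O(\varepsilon)$ uniformly over all retained indices, and that the splitting $\mathbf{G}^+=\mathbf{G}_1+\mathbf{G}_{n-1}$ really isolates every index triple at which the denominator vanishes. Once this splitting is justified, invoking the standard rank bound for $1/s$ (as cited from \cite{KhorBook:17}) on each piece is routine, and the stated ranks and accuracies follow by additivity of canonical decompositions.
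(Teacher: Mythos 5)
Your proposal is correct and follows essentially the same route as the paper: the same splitting of ${\bf G}^+$ into the face $i=1$ plus its complement, the same invocation of the Laplace-transform/sinc-quadrature rank-$R$ bound for $[\frac{1}{\lambda_i+\lambda_j+\lambda_k}]$ on each piece, and the same rank accounting $R+(R+1)=2R+1$ with accuracy $2\varepsilon$ by the triangle inequality. Only note the small inconsistency in your first paragraph, where ${\bf G}_{n-1}$ is described as supported on $\{i+j+k>3\}$ rather than on the slices $i\geq 2$ (the complement of the face you actually use for ${\bf G}_1$); with that fixed, the denominators on each piece are bounded below by $a$ exactly as condition (\ref{eqn:condPositive}) requires.
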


From the algorithmic point of view, the rank-structured approximation to the discrete periodic 
Laplacian inverse operators is performed by using
  the multigrid Tucker decomposition of the 3D tensors obtained by reshaping of the
diagonal system matrix represented in the Fourier basis into a 3D tensor ${\bf G}^+$. The subsequent
Tucker-to-canonical decomposition, see \cite {KhKh3:08}, transforms  the Tucker core tensor 
to a canonical one with a small rank, preserving the approximation precision of the Tucker decomposition.

Now assume that $\mathcal{F}(\varLambda)$ can be expressed approximately by a short-term linear combination 
of Kronecker rank-$1$ matrices. 
Then, the low-rank approximation of  $\mathcal{F}(A)$ is reduced to approximation of the diagonal 
matrix $\mathcal{F}(\varLambda)$.
Assume we have a decomposition (in 2D case)
\begin{equation*}
\mathcal{F}(\varLambda)  = \sum_{k=1}^R \mbox{diag} \big(\mathbf{u}_1^{(k)} \otimes \mathbf{u}_2^{(k)}\big),
\end{equation*}
with vectors $\mathbf{u}_i^{(k)}\in \mathbb{R}^{m_i}$ and $R \ll \min(n_1,n_2)$. Now let
$\mathbf{x}\in \mathbb{R}^{N_d}$ be a vector given in a low-rank format, i.e.
\begin{equation*}
	\mathbf{x} = \sum_{j=1}^S \mathbf{x}_1^{(j)} \otimes \mathbf{x}_2^{(j)},
\end{equation*}
with vectors $\mathbf{x}_i^{(j)}\in \mathbb{R}^{n_i}$ and $S \ll \min(n_1,n_2)$. Then 
a matrix-vector product can be calculated by using only 1D matrix-vector operations
\begin{equation}\label{eqn:Lap2DtimesX}
\begin{split}
	\mathcal{F}(A) \mathbf{x} &= (F_1^*\otimes F_2^*) 
	\bigg( \sum_{k=1}^R \mbox{diag} \big(\mathbf{u}_1^{(k)} \otimes \mathbf{u}_2^{(k)}\big) \bigg)
	(F_1\otimes F_2 )
	\bigg( \sum_{j=1}^S \mathbf{x}_1^{(j)} \otimes \mathbf{x}_2^{(j)} \bigg)\\
	&=
	(F_1^*\otimes F_2^*) 
	\bigg( \sum_{k=1}^R \mbox{diag} \big(\mathbf{u}_1^{(k)} \otimes \mathbf{u}_2^{(k)}\big) \bigg)
	\bigg( \sum_{j=1}^S F_1 \mathbf{x}_1^{(j)} \otimes F_2 \mathbf{x}_2^{(j)} \bigg)\\
	&= \sum_{k=1}^R \sum_{j=1}^S F_1^* \big( \mathbf{u}_1^{(k)} \odot F_1 \mathbf{x}_1^{(j)} \big) 
	\otimes
	F_2^*\big( \mathbf{u}_2^{(k)} \odot F_2 \mathbf{x}_2^{(j)} \big),
\end{split}
\end{equation}
where $\odot$ denotes the componentwise (Hadamard) product of 1D vectors.
Using the FFT, the expression \eqref{eqn:Lap2DtimesX} can be computed in factored 
form in $\mathcal{O}(RS n\log n)$ flops, where
$n = \max(n_1,n_2)$.

In the case $d=3$, equation \eqref{eqn:Lap2DtimesX} takes a form
\begin{equation}\label{eqn:Lap3DtimesX}
	\mathcal{F}(A) \mathbf{x} = 
	\sum_{k=1}^R \sum_{j=1}^S F_1^* \big( \mathbf{u}_1^{(k)} \odot F_1 \mathbf{x}_1^{(j)} \big) \otimes
	F_2^*\big( \mathbf{u}_2^{(k)} \odot F_2 \mathbf{x}_2^{(j)} \big)
	\otimes F_3^*\big( \mathbf{u}_3^{(k)} \odot F_3 \mathbf{x}_3^{(j)} \big),
\end{equation}
and similar in the general case of $d>3$. Notice that the total number of terms in (\ref{eqn:Lap3DtimesX}),
that is $RS$, does not depend on the number of dimensions $d$.

 The numerical performance of the PCG iteration with the rank-structured preconditioner  
$A^+_\Delta$ described above will be discussed in section \ref{sec:Numer_2D_3D}.


\section{Numerical study}\label{sec:Numer_2D_3D}

\subsection{Fast solvers using matrix generation combined with tensor-product preconditioner}
\label{ssec:Fast_Matr}

Here, we consider the beneficial properties of the presented elliptic problem solver 
applied to the case of checkerboard type coefficients profile compared with the 
previous scheme with more general overlapping-type configurations \cite{KKO:17}.
Figure \ref{fig:Clust_vsL_Checkeboadr} illustrates the configuration of the stochastic 
realization for checkerboard type coefficient with large RVE size, $L=256$.
 \begin{figure}[htb]
\centering
\includegraphics[width=7.6cm]{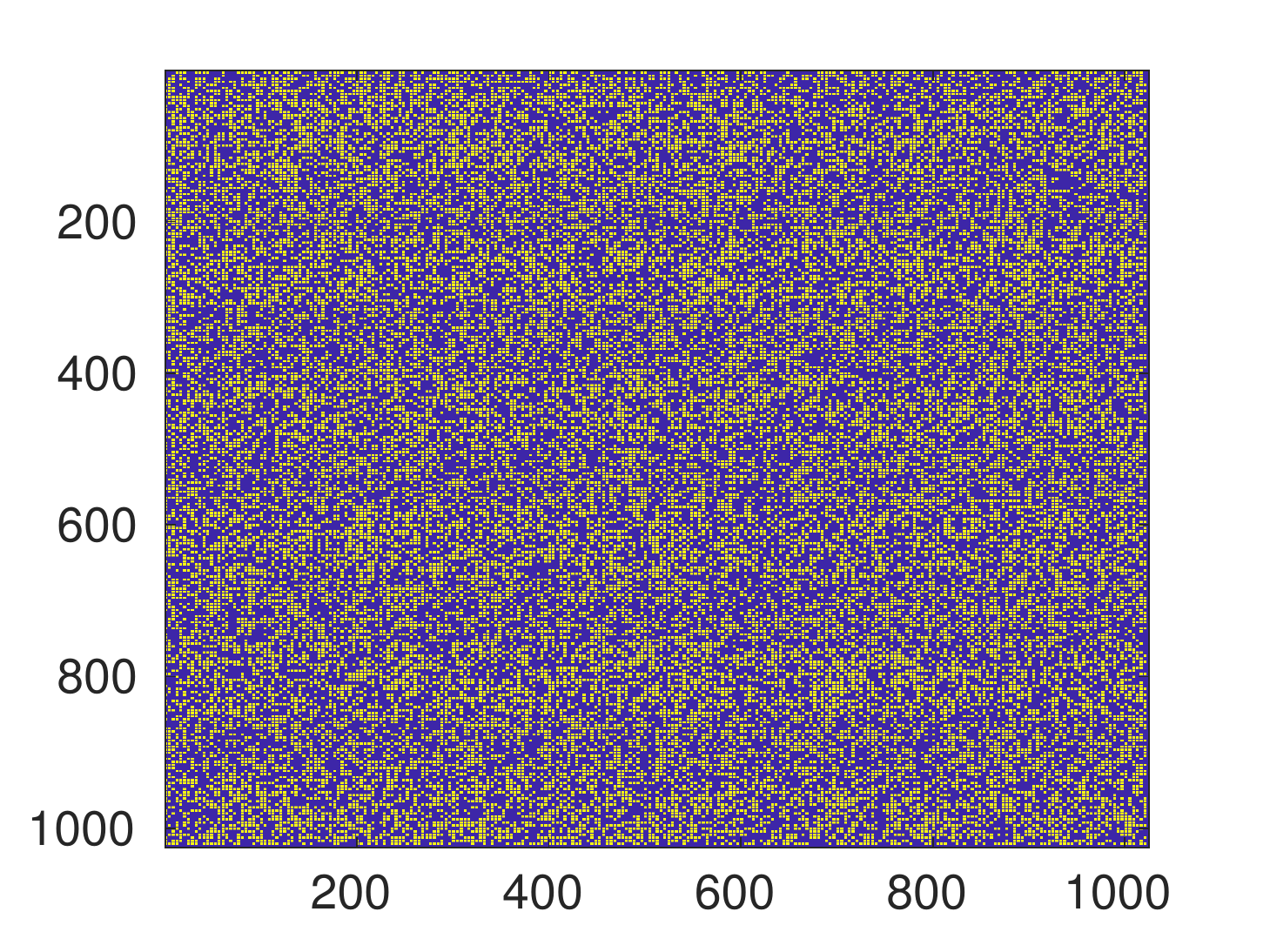}
\includegraphics[width=7.6cm]{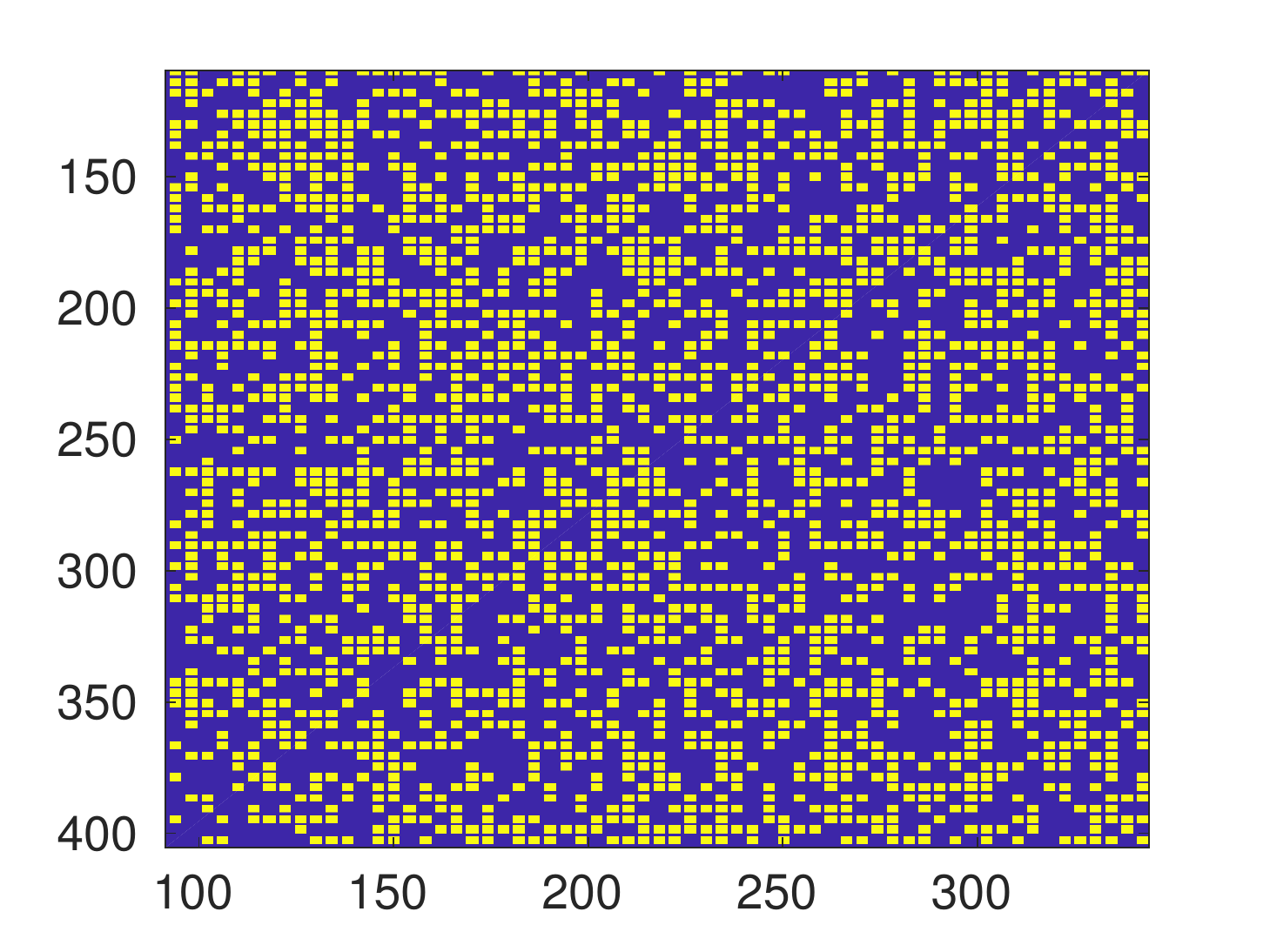}\\
 \caption{\small Example of the realization of coefficient for   $L\times L$ lattice with $L=256$,  
   $\alpha =1/4$, computed on the grid $n\times n$ with $n=1024$. 
   Right panel zooms for the coefficients configuration on the left.}
\label{fig:Clust_vsL_Checkeboadr}
\end{figure}

Table \ref{Tab:Times_check_2D} compares the CPU times (sec.) for matrix generation, calculation 
of the loading vector in the right-hand side, and for the PCG iteration 
 in the course of solving equation (\ref{eqn:DD_Eqn}) for the general case of 
 overlapping-type realizations of coefficients considered 
 in \cite{KKO:17} (marked by (1)),  and for the approach presented in this paper and 
 based on the low-rank preconditioner (marked by (2)).
 Here we set $n_0=4$, $\alpha=1/4$ (see Figure \ref{fig:Clust_vsL_Checkeboadr}, top), 
 such that the vector size $n_1$ on the finest grid is about $n_1 \approx 4.0 \cdot 10^{6}$.
 The iteration stopping criteria is chosen by $\epsilon=10^{-8}$.

\begin{table}[tbh]
\begin{center}\footnotesize
\begin{tabular}
[c]{|r|r|r|r|r|r|r|r||r|}%
\hline
$L^2$ &  $m$/$m^2$ &   matr. (1) &   matr. (2) & RHS (1) &   RHS (2) & solve (1) & solve (2)  \\
\hline  
$4^2$ &   17/289     & 0.012   &  0.006   & 0.01   &  0.007  &  0.006 & 0.01 \\
\hline
$8^2$ &   33/1089      & 0.06     & 0.007   & 0.045  & 0.003   & 0.137  & 0.027 \\
\hline
$16^2$ &  65/4225      & 0.34     & 0.014  & 0.19   & 0.010  & 0.11  & 0.15\\
\hline
$32^2$ &  129/16641    & 3.0    & 0.038  & 0.8    & 0.014   &  0.5  & 0.6\\
\hline
$64^2$ &  257/66049     & 36     & 0.24  & 3.7    & 0.069  & 2.6  & 2.2  \\
\hline
$128^2$ & 513/263169   & 561    & 1.4   & 22     & 0.38  & 13.8  & 11.9 \\
\hline
$256^2$ & 1025/  $1.0 \cdot 10^6$  & --  & 11.3  &  --     & 2.84  & --  & 66.8 \\
\hline
$512^2$ & 2049/ $4.0 \cdot 10^6$  & -- & 105.6 &  --     & 1.2  & --  & 360.0 \\
\hline
 \end{tabular}
\caption{\small  CPU times (sec) for generating the stiffness matrix for the 2D stochastic 
homogenization problem, the right-hand side, 
and for the solution of the discretized linear system of equations for the checkerboard  
type realizations (2), compared with that for the case of overlapping samples (1).
}
\label{Tab:Times_check_2D}
\end{center}
\end{table}
Solver (1) corresponding to overlapping coefficients profile  
displays limitations in the matrix generation times  (dominating part in the calculations) 
for $L=128$, which exceeds  $561$ seconds.  Matrix generation time  
 for the  solver (2) using checkerboard-type coefficient is essentially smaller ($1.4$ seconds),
which is important taking into account the required large number of realizations 
(up to $M\sim 10^4 \div 10^5$) to be performed for analyzing important quantities
of stochastic problem in random media.
 For larger $L$ the solver (2) might be limited by the solution time, which is about $360$ 
 seconds for $L=512$. 
 
The latter limitation can be relaxed by using the low Kronecker rank (LKR)
preconditioner  $A^+_\Delta$ described in the previous section. 
In this case we observe the balance between the matrix generation cost and that for the PCG iteration.
Table \ref{Tab:times_Fast_solv} represents CPU times (sec.) for PCG iteration
with a regularized preconditioner (RP) $(-\Delta_h + \delta I)^{-1}$
and with the LKR preconditioner by using the rank-structured approximation of the 
pseudo-inverse matrix $A^+_\Delta$, both  applied with parameters 
$n_0=4$, $\alpha=1/4$ and tolerance $\epsilon=10^{-8}$.
The number of inclusions varies from $16$ to $512^2$.
\begin{table}[htb]
\begin{center}\footnotesize 
\begin{tabular}
[c]{|r|r|r|r|r|r|r|r|r|}%
 \hline
 $L^2$   & $4^2$ & $8^2$ & $16^2$ & $32^2$ & $64^2$ & $128^2$ & $256^2$ & $512^2$   \\
  \hline
 RP  & 0.007 & 0.04  & 0.14  & 0.6   & 3.4  & 12.4 & 59.4   & 382.0 \\ 
 \hline
 LKRP   & 0.008 & 0.02  & 0.02  & 0.8   & 0.27 & 0.9 & 6.1  & 11.0 \\ 
 \hline
 \end{tabular}
\caption{\small CPU times for PCG solver with a regularized preconditioner (RP) and with the rank-structured
preconditioner (LKR), $A^+_\Delta$, $n_0=4$, $\alpha=1/4$.}
\label{Tab:times_Fast_solv}
\end{center}
\end{table}

Figure \ref{fig:Times_check_3D_fig} (left) 
 visualizes the data presented in Tables \ref{Tab:Times_check_2D} and \ref{Tab:times_Fast_solv}.
 
 Finally we notice that our numerical tests confirm Lemma \ref{lem:Cond_Number} 
which proves uniform spectral equivalence of the preconditioned in both the grid-size $n_1$ and 
the RVE size $L$. Indeed, fixed the stopping criteria, the number of PCG iterations 
was almost the same for numerical experiments in 2D and 3D cases.

 \subsection{Fast solver for the 3D stochastic problem}
 \label{3D_Numeric}
 
 Numerical simulations for 3D stochastic homogenization require much larger computational 
 resources compared with 2D case.
 Notice that fixed $n_0=4$, the problem size (i.e., vector size $N_d=n^d$) in the 3D case 
 for $L=32$ and $L=64$  equals to $n^3=129^3$ and $n^3=257^3$, respectively, 
 where $n= n_0 \, L +1$. The assembling of the corresponding huge $N_d\times N_d$ system matrix $A$
 is performed by fast tensor based techniques as described in section \ref{ssec:FDM_Kron_stoch}, 
 see also \cite{KKO:17}.
 The system matrix has to be recomputed for large number of stochastic realizations 
 $m=1,\dots,M$, that might be  of the order of several tens of thousand and more. 
 \begin{figure}[tbh]
\centering
\includegraphics[width=7.6cm]{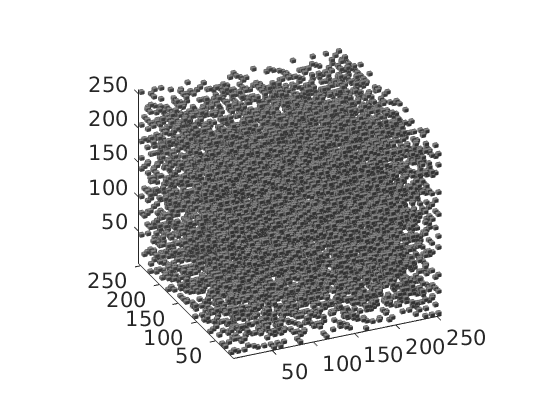}
\includegraphics[width=7.6cm]{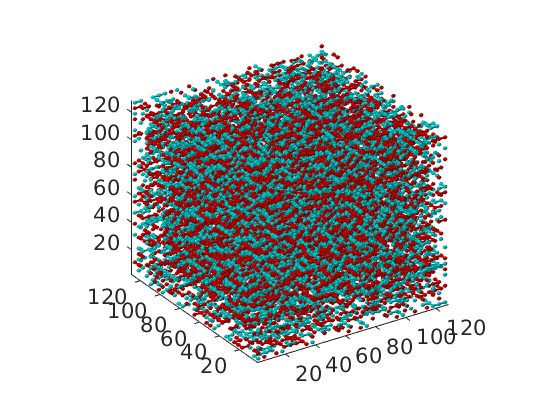}
 \caption{\small Example of 3D stochastic coefficient on $L\times L\times L$ lattice  with  
 $L=32$ with fixed contrast coefficient (left) and the coefficient with two randomly distributed 
 values of contrast.} 
\label{fig:3D_coeff_L32}
\end{figure}

Figure \ref{fig:3D_coeff_L32} (left) illustrates the configuration of the matrix coefficient visualized 
 for the 3D realization on the  $L\times L \times L$ lattice, with 
 $L=32$, $n_0 =8$ and the density parameter $\alpha=1/4$. The number of inclusions is 
 about $\frac{1}{2} L^3\approx 16000$. Figure \ref{fig:3D_coeff_L32} (right) presents the example of random
 coefficient with two different values of contrast.
  \begin{table}[htb]
\begin{center}\footnotesize
\begin{tabular}
[c]{|r|r|r|r|l|l|r||}%
\hline
$L^3$ &  $n_1$/$n_1^3$   &  $h$       & $S$ & matr.  & RHS  & solver \\ 
\hline  
$4^3$ &   17/ 4913    &  6.25e-02 & 32     & 0.012  & 0.013  & 0.04\\ 
\hline
$8^3$ &   33/ 35937   & 3.13e-02 & 238     & 0.02  & 0.015  & 0.13 \\ 
\hline
$16^3$ &  65/ 274625  & 1.56e-02  & 2048    & 0.1   & 0.05   & 0.62 \\ 
\hline
$32^3$ &  129/ 2.1+06   & 7.81e-03 & 16338  & 2.4   & 0.38    & 11.2 \\ 
\hline
$64^3$ &  257/ 16.9e+06  & 3.90e-03 & 131494  & 28.9  & 2.7  & 35.1 \\ 
\hline
$128^3$ & 513/ 135e+06   & 1.95e-03 & 1049066  & 431.7  & 20.6 & 425 \\ 
\hline
\end{tabular}
\caption{\small  CPU times (sec) for generating the stiffness matrix for the 3D stochastic 
homogenization problem, the right-hand side , 
and for the solution of the discretized linear system of equations. A vector size of the
corresponding problem is $n_1^3$, and $h$ is the mesh size.
}
\label{Tab:Times_check_3D}
\end{center}
\end{table}

 \begin{figure}[htb]
\centering
\includegraphics[width=7.2cm]{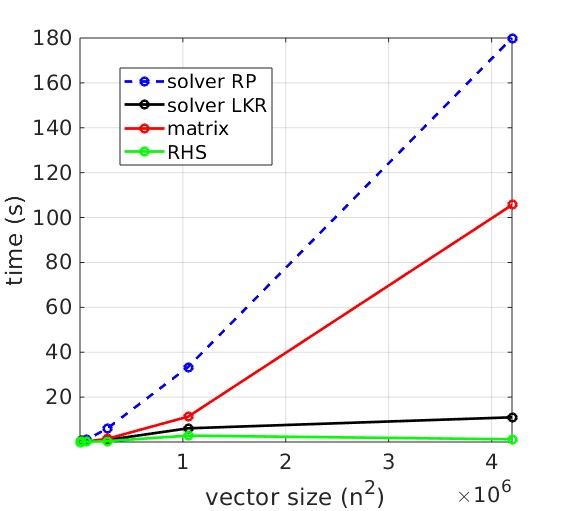} 
\includegraphics[width=7.2cm]{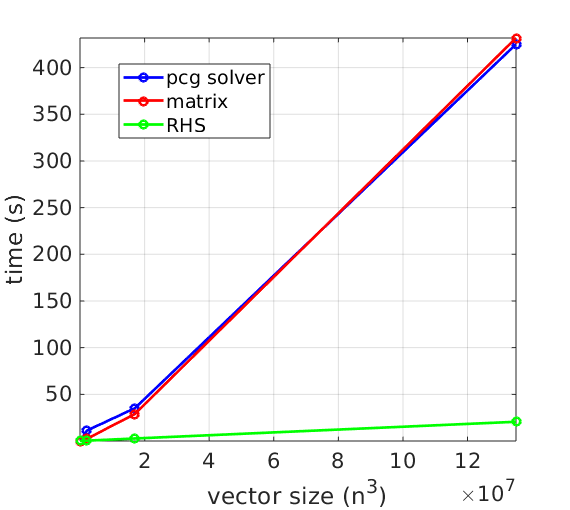}  
\caption{\small Left panel (2D case): times for matrix generation (red line), computation of the RHS (green) and for
 solution of the linear system by the regularized preconditioner (dashed blue) and by the tensor based LKR 
 preconditioner (black) versus the 2D problem size $n^2$.
Right panel (3D case): times for matrix generation (red line), computation of the RHS (green) and for
solution of the 3D problem (blue) versus the system size $n^3$.}
\label{fig:Times_check_3D_fig}
\end{figure}

 We demonstrate the computational complexity of the robust iterative solver for 3D elliptic problems 
applying the stopping criteria $\varepsilon=10^{-7}$. 
In all cases the number of PCG iterations did not exceed $20$ uniformly in the problem size.

Table \ref{Tab:Times_check_3D} displays the computation times for one solve of the 3D stochastic problem
for increasing RVE size of $L^3$, and respectively increasing vector size $n_1^3$ in the linear system 
of equations, as well as the number of stochastic samples, $S$, for the fixed realization 
For example, the vector size for $L=128$ is about
$135 \cdot 10^6$, and the corresponding number of stochastic samples exceeds $10^6$.
This huge problem is solved in about $7$ minutes (425 sec) in Matlab.
Figure \ref{fig:Times_check_3D_fig} (right) visualizes the data presented in Table \ref{Tab:Times_check_3D}.

We point out that the main time consuming 
steps include the matrix generation and the solution of the discrete elliptic problem for every of three dimensions 
by calling the ``PCG'' iteration routine with the rank-structured preconditioner in the form of 
periodic Laplacian pseudo inverse. We observe the well balanced complexity of both time consuming steps
of the algorithms.


\subsection{Example of application to numerical estimation of the homogenized coefficient matrix}
\label{ssec:SystErStandDev}

The set of numerical approximations $\{\bar{\mathbb{A}}^{(m)}_L\}$
to the homogenized matrix $\mathbb{A}_{\mbox{\footnotesize hom}}$ is calculated by 
(\ref{eqn:A_hom_m}) for the sequence $\{\mathbb{A}^{(m)}_L(x)\}$ of $m=1,...,M$ realizations, 
where $M$ is large enough,
and the artificial period $L$ defines the size of RVE. 
For a fixed $L$, the approximation  $\bar{\mathbb{A}}^{M}_L$ is computed 
as the \emph{empirical average}  of the sequence $\{\bar{\mathbb{A}}^{(m)}_L\}_{m=1}^M$,
\begin{equation} \label{eqn:A_hom_Empir}
 \bar{\mathbb{A}}^{M}_L= \frac{1}{M}\sum^{M}_{m=1} \bar{\mathbb{A}}^{(m)}_L.
\end{equation} 
By the law of large numbers we have that the empirical average converges 
almost surely to the \emph{ensemble average} (expectation)
\begin{equation} \label{eqn:A_hom_Ensemble}
 \langle  \bar{\mathbb{A}}_L\rangle_L =  
\lim\limits_{M \to \infty} \bar{\mathbb{A}}^{M}_L. 
\end{equation}
Furthermore, by qualitative homogenization theory, as the artificial period $L\to \infty$,
this converges to the homogenized matrix, see \cite{GlOtto:15},
\begin{equation} \label{eqn:A_hom}
 \mathbb{A}_{\mbox{\footnotesize hom}}: = \lim\limits_{L\to \infty} \langle  \bar{\mathbb{A}}_L\rangle_L .
\end{equation}
In some cases, we use the entry-wise notation for $d\times d$ 
matrices ${\mathbb{A}}=[a_{ij}]$, $i,j=1,\ldots,d$,
for example, $\langle \bar{\mathbb{A}}_L \rangle =[\bar{a}_{L,ij}]$ and 
$\bar{\mathbb{A}}_L^{(m)}=[\bar{a}_{L,ij}^{(m)}]$, etc.

\begin{figure}[htb]
\centering
\includegraphics[width=6.8cm]{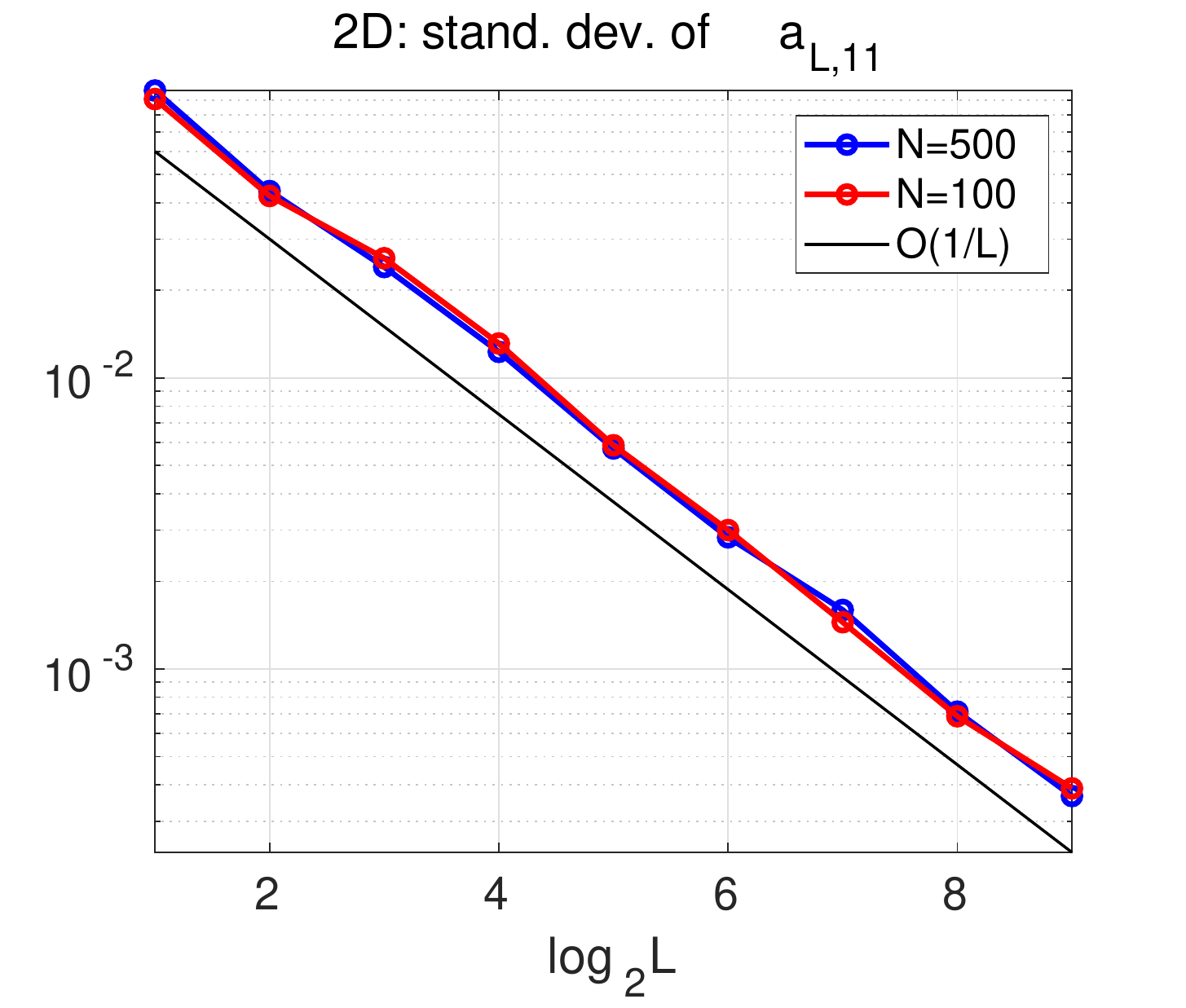}
\includegraphics[width=7.8cm]{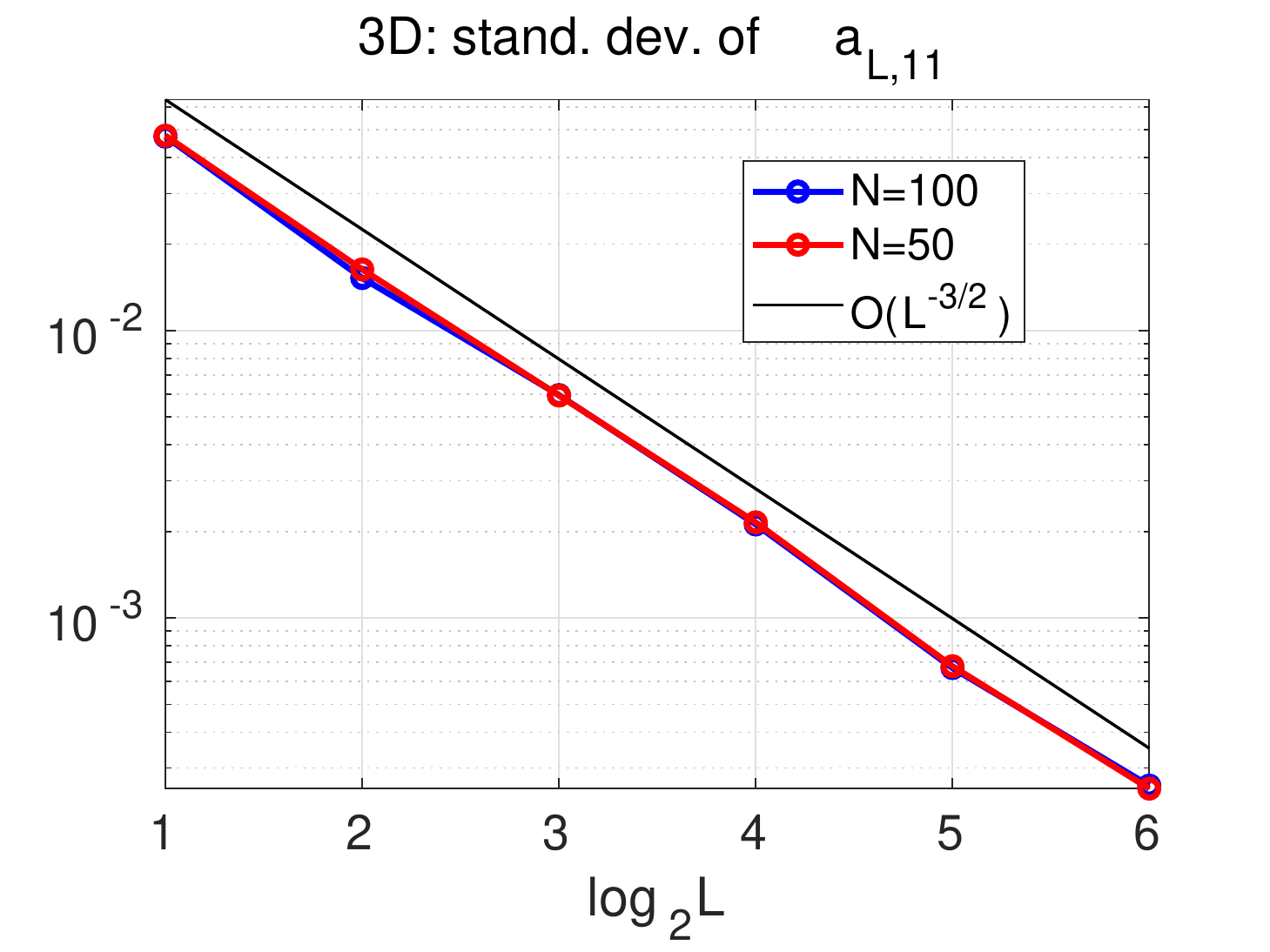}
 \caption{\small Standard deviation for 2D case $a_{L,11}$ versus $L$,  
 $N=500$,  $L=2^p$, $ p=1,\; 2,\, \ldots, 9$ (left) and  
 for 3D case $N=100$ $L=2^p$, $ p=1,\; 2,\, \ldots, 6$ (right);
$n_0=4$, $\alpha=\frac{1}{4}$, $\lambda=0.4$.}
\label{fig:Emp_a12_M20}
\end{figure}

As an example for application of our techniques, 
we  numerically study  the asymptotic of 
the \emph{random part}  of the error (standard deviation) for fixed $N$ of moderate size, 
to confirm the theoretical convergence rate in $L$,
see \cite{GlOtto:15},
 \begin{equation}
\label{eqn:var}
  \sigma_L = \langle \left|\mathbb{A}_{\mbox{\footnotesize hom}} -\langle \bar{\mathbb{A}}_L\rangle_L \right|^2 
  \rangle_L^{1/2} \leq C \, L^{-d/2}.
 \end{equation}
In numerical experiments the theoretical value of standard deviation $\sigma_L$ 
is approximated by the commonly used computable quantity $\sigma_L^M$ calculated
for a long enough sequence of $M$ realizations by 
\[
 \sigma_L^M = \sqrt{\frac{1}{M-1}\sum_{m=1}^{M} (\bar{\mathbb{A}}^{(m)}_L - \bar{\mathbb{A}}^{M}_L)^2}.
\]
The numerical results are illustrated in Figure \ref{fig:Emp_a12_M20}. The calculations for 2D case, 
 depicted in the left panel can be compare with the similar results in \cite{KKO:17} obtained for the case
 of overlapping coefficients sampling. Figure \ref{fig:Emp_a12_M20}, right, confirms the asymptotic 
 estimate in (\ref{eqn:var}) for 3D case.

 \section{Conclusions}\label{sec:concl}

We present the numerical techniques for discretization and fast solution of the 2D and 3D elliptic equations 
with strongly varying piecewise constant coefficients arising  
in numerical analysis of stochastic homogenization problems for multi-scale composite materials.
We use random checkerboard coefficient configurations with 
the large size of the RVE, $L$. For a fixed $L$, our method allows to 
avoid the generation of the new FEM space at each stochastic realization.
For every realization, fast assembling of the FEM 
stiffness matrix is performed by agglomerating the Kronecker tensor 
products of 1D FEM discretization matrices.

The spectrally close preconditioner is constructed by using the low Kronecker 
rank approximation to the pseudo-inverse of discrete 2D and  3D periodic Laplacian.
The resulting large linear system of  equations is solved by the preconditioned  
CG iteration  with  the convergence rate that is independent of $L$ and the grid size,
as well as of the variation in jumping coefficients.
The numerical tests illustrate the performance of the Matlab implementation in both 
2D and 3D cases. 

The proposed elliptic problem solver 
 can be applied in the numerical analysis of 3D  stochastic homogenization problems for ergodic processes
 with variable  contrast in random coefficients, 
 for solving numerically the 3D stochastic elliptic PDEs in random heterogeneous materials, 
 for  solution of quasi-periodic (multi-scale) geometric homogenization problems, in  
 the computer simulation of dynamical many body interaction processes and multi-particle electrostatics,
 as well as for numerical analysis of optimal control problems in random media.

\vspace{3mm}

{\bf Acknowledgements} \\

The authors are thankful to Prof. Felix Otto for useful discussions and motivation to develop
an efficient solution scheme for the 3D elliptic PDEs with random coefficients in relation
to numerical simulations for stochastic homogenization problems.

\vspace{3mm}

\begin{footnotesize}

\end{footnotesize}

\end{document}